\tikzset{vertex/.style={circle,fill=black,inner sep=1.6pt,draw}, edge/.style={-}}
\definecolor{blueviolet}{RGB}{60,50,200}
\definecolor{oliveg}{RGB}{40,200,30}
\newtheorem{theorem}{Theorem}[section]
\theoremstyle{definition}
\newtheorem{definition}[theorem]{Definition}
\newtheorem{corollary}[theorem]{Corollary}
\newtheorem{claim}[theorem]{Claim}
\newtheorem{remark}[theorem]{Remark}
\newtheorem{example}[theorem]{Example}
\algrenewcommand\alglinenumber[1]{\footnotesize #1.}
\newcommand{\comment}[1]{}
\renewcommand{\E}{\mathbb{E}}
\newcommand{\Erdos}{Erd\H{o}s\xspace}
\newcommand{\Renyi}{R\'enyi\xspace}
\newcommand{\ER}{\Erdos{}-\Renyi{}}
\newcommand{\fp}{\mathrm {fp}}
\renewcommand\thefootnote{\textcolor{red}{\arabic{footnote}}}
\newcommand\footnoteref[1]{\protected@xdef\@thefnmark{\ref{#1}}\@footnotemark}
\definecolor{DSgray}{cmyk}{0,0,0,0.7}
\newcolumntype{s}{>{\columncolor[HTML]{AAACED}} m{1cm}}
\let\geq=\geqslant %
\let\le=\leqslant 
\let\ge=\geqslant %
\renewcommand{\Pr}{\mathbb P}
\renewcommand{\thefootnote}{\fnsymbol{footnote}}
\date{}
\begin{document}
    \title{%
        The Mixed Birth-death/death-Birth Moran Process
    }
    \author{
        David A. Brewster\thanks{%
            John A. Paulson School of Engineering and Applied Sciences, Harvard University, Boston, MA 02134, USA
        }\,
        \thanks{
            Department of Molecular and Cellular Biology, Harvard University, Cambridge, MA 02138, USA
        }\,
        \thanks{%
            Department of Organismic and Evolutionary Biology, Harvard University, Cambridge, MA 02138, USA
        }
        \quad
        Yichen Huang\footnotemark[1]
        \quad
        Michael Mitzenmacher\footnotemark[1]
        \quad
        Martin A. Nowak\footnotemark[3]\,
        \thanks{
            Department of Mathematics, Harvard University, Cambridge, MA 02134, USA
        }
    }
    \maketitle
\thispagestyle{empty}

\begin{abstract}
We study evolutionary dynamics on graphs in which each step consists of one birth and one death, referred to generally as Moran processes. In standard simplified models, there are two types of individuals: residents, who have a fitness of $1$, and mutants, who have a fitness of $r$.  Two standard update rules are used in the literature. In \emph{Birth–death (Bd)}, a vertex is chosen to reproduce proportional to fitness, and one of its neighbors is selected uniformly at random to die and be replaced by the offspring. In \emph{death–Birth (dB)}, a vertex is chosen uniformly to die, and then one of its neighbors is chosen, proportional to fitness, to place an offspring into the vacancy. 
Two crucial quantities are: the unconditional absorption time, which is the expected time until only residents or only mutants remain, and the fixation probability of the mutant, which is the probability that at some time the mutants occupy the whole graph. Birth-death and death-Birth rules can yield significantly different outcomes regarding these quantities on the same graph, rendering conclusions dependent on the update rule.

We formalize and study a unified model, the $\lambda$-mixed Moran process, in which each step is independently a Bd step with probability $\lambda\in[0,1]$ and a dB step otherwise. We analyze this mixed process and establish a few results that form a starting point for its further study. 
All of our results are for undirected, connected graphs. As an interesting special case, we show at $\lambda=1/2$ for any graph that the fixation probability when $r=1$ with a single mutant initially on the graph is exactly $1/n$, and also at $\lambda=1/2$ that
the absorption time for any $r$ is $O_r(n^4)$ (that is, with an $r$-dependent constant).  We also show results for graphs that are ``almost regular,'' in a manner defined in the paper.
We use this to show that for suitable random graphs from $G\sim G(n,p)$ and fixed $r>1$, with high probability over the choice of graph, 
the absorption time is $O_r(n^4)$, the fixation probability is $\Omega_r(n^{-2})$, and we can approximate the fixation probability in polynomial time.

Another special case is when the graph has only two possible values for the degree $\{d_1, d_2\}$ with $d_1 \le d_2$. For those graphs, we give exact formulas for fixation probabilities under $r=1$ and any $\lambda$, and establish $O_r(n^4 \alpha^4)$ absorption time regardless of $\lambda$, where $\alpha = d_2/d_1$. We also provide explicit formulas for the star and cycle under any $r$ or $\lambda$.

\end{abstract}
\pagenumbering{arabic}
\newpage
\addtocounter{page}{-1}
\renewcommand{\thefootnote}{\arabic{footnote}}

\section{Introduction}
Evolution occurs in populations of reproducing individuals. The \emph{modern synthesis} in the early $20^\text{th}$ century fused natural selection with Mendelian inheritance into a quantitative framework~\cite{huxley1942evolution}. Within this framework, \emph{population genetics} studies how alleles spread through a population, with particular focus on \emph{fixation}—the event that a mutant allele takes over a resident background~\cite{
fisher1923xxi,haldane1927mathematical,Wright_1931,moran1959survival,kimura1962probability,kimura1979fixation,kimura1980average,nei1983estimation,charlesworth1997rapid,whitlock2000fixation,przeworski2003estimating,patwa2008fixation, cvijovic2015fate}. 
Evolutionary graph theory is a general framework to study evolutionary dynamics in structured populations~\cite{nowak2003linear,lieberman2005evolutionary,nowak2006evolutionary,ohtsuki_simple_2006}.
A common class of models studied in evolutionary graph theory is variations of what are referred to as \emph{Moran processes}. In this model, a population of size $n$ is represented by a (possibly weighted, directed) graph on $n$ locations, where edges constrain who can replace whom\footnote{The original Moran process, described by Patrick Moran in 1958~\cite{enwiki:1277295443,moran1958random}, considered a well-mixed population where each individual can replace any other individual, which is equivalent to the graph being complete. Generalizations to the process on graphs were introduced early in the development of evolutionary graph theory \cite{lieberman2005evolutionary,nowak2006evolutionary}. We follow the modern literature (e.g. \cite{diaz_approximating_2014, diaz2016absorption, goldberg2020phase}) and use the term ``Moran process” to refer to more generalized processes, where additional features such as graph structure and the order of the birth and death steps may also be considered.}~\cite{lieberman2005evolutionary}.
In this work, we consider unweighted, undirected graphs. Two types of individuals live on the graph, with one individual at each vertex: residents, who have fitness $1$, and mutants, who have fitness $r$. 

Prior works mainly consider two possible update rules that are generalizations of the original Moran process. With \emph{Birth-death\footnote{The ``B'' in ``Birth'' is uppercase while the ``d'' in ``death'' is lowercase to emphasize that selection only acts on birth, c.f. \Cref{remark: capitalise}.} (Bd)} updates, a vertex (that is, an individual) $u$ is chosen for reproduction with probability proportional to its fitness, then a neighbor $v$ of $u$ is chosen uniformly at random to die and is replaced by an offspring with the same type as $u$. In \emph{death-Birth (dB)}, a vertex $v$ is chosen uniformly to die, then a neighbor $u$ is chosen with probability proportional to its fitness to place an offspring of the same type at $v$. Two main quantities of interest are the \emph{fixation probability}, which is the probability that mutants starting at a given subset of vertices take over the whole graph before being eliminated, and the \emph{absorption time}, which is the expected number of steps before the graph only contains one type (resulting from either fixation or extinction).

Technically, both quantities can be expressed as the solutions to linear systems with $2^n$ variables by encoding all possible states. Due to the obvious intractability for solving the system directly, prior work has considered asymptotic bounds and approximation schemes (e.g., \emph{fully polynomial randomized approximation schemes, FPRAS}) for these measures \cite{ibsen2015computational,diaz_approximating_2014}. Past work in evolutionary graph theory has yielded a rich set of results related to these processes and metrics~\cite{lieberman2005evolutionary,hindersin2015most,adlam2015amplifiers,galanis2017amplifiers,pavlogiannis2017amplification,pavlogiannis2018construction,goldberg2019asymptotically,allen2020transient,sharma2022suppressors,tkadlec2019population,tkadlec2020limits,allen2017evolutionary}. However, these results are sensitive to the particular updating dynamics. The Bd and dB processes encode distinct biological stories, and it is sometimes the case that results under Bd dynamics behave very differently from results for dB dynamics~\cite{svoboda2024amplifiers}.

In this paper, we explore a smooth interpolation between the two processes, which we call the \emph{$\lambda$-mixed Moran process}. We unify these models with a single parameter $\lambda\in[0,1]$; in each step, the process performs a Bd update with probability $\lambda$ and a dB update with probability $1-\lambda$. 
While seemingly simple, the model exhibits interesting properties. For example, we give an example where the fixation probability for $\lambda \in (0,1)$ is not bounded by its values at $\lambda = 0$ and $\lambda = 1$, implying also that monotonicity in $\lambda$ does not hold.

Our main result holds for random graphs generated by the \ER{} process $G\sim G(n,p)$ for $p\ge \omega(\log n/n)$ when $r>1$. We show that, with high probability, the $\lambda$-mixed Moran process on $G$ has absorption time $O_r(n^4)$ and fixation probability $\Omega_r(n^{-2})$ for any non-empty initial set of mutants\footnote{$O_r(\cdot)$ and $\Omega_r(\cdot)$ hide factors that depend solely on $r$ and are constant when $r$ is constant.}, also implying an FPRAS for the fixation probability. This result is based on a more general analysis for ``almost regular'' graphs; we define almost regular later in the paper and show that random graphs are almost regular with high probability.
The main technical tools we use are appropriate choices of potential functions defined on the current mutant set and a drift argument that bounds the expected change of the potential function.

However, we also consider several other interesting special cases.
For example, we start by looking at the case $\lambda=1/2$, where Bd and dB steps are equally likely, to introduce our notation and methods. For this case, we establish that when $r=1$ (neutral evolution), the fixation probability starting with a single mutant at \emph{any} vertex is $1/n$, regardless of the graph structure. Furthermore, for any $r$, the expected absorption time is $O_r(n^4)$. Together, these imply an FPRAS for the fixation probability when $r\ge 1$ (again with the restriction $\lambda = 1/2$). \footnote{Note that an FPRAS for $r\ge 1$ cannot, in general, be converted into an FPRAS for arbitrary $r$ by swapping the roles of mutants and residents (so the new mutants have fitness $r'=1/r$). This transformation yields an FPRAS for the extinction probability, i.e., for $1-f$, rather than for the fixation probability $f$. When $f$ is small, a $1\pm\varepsilon$ multiplicative approximation for $1-f$ does not translate into a $1\pm\varepsilon$ multiplicative approximation for $f$.}

Another special case is when the graph has only two kinds of degrees $\{d_1, d_2\}$, which includes many graph families, such as stars, paths, and biregular graphs (bipartite graphs where vertices on the same side have the same degree). We give exact formulas for fixation probabilities under $r=1$ and any $\lambda$, and establish $O_r(n^4 \alpha^4)$ absorption time regardless of $\lambda$ and $r$, where $\alpha = d_2/d_1$. We also give explicit formulas for cycle and star graphs for any $r$ and $\lambda$.

\paragraph{Related Work}
Several aspects of the Bd and dB Moran processes have been extensively studied. It is well known that the fixation probabilities under both processes are monotone in $r$, and exact formulas for the fixation probability when $r=1$ are known \cite{lieberman2005evolutionary,ohtsuki_simple_2006} (as we describe after introducing notation in the next section). Both processes are also known to have $O_r(n^4)$ absorption time on any graph \cite{diaz_approximating_2014,durocher2022invasion}, and therefore admit an FPRAS for the fixation probability when $r\ge 1$. For the Bd update, \cite{goldberg2020phase} gives tighter bounds for the phase transitions of the fixation probability and proves an $O(n^{3+\varepsilon})$ upper bound on the absorption time and provides a family of graphs that has $\Omega(n^3)$ absorption time, showing their analysis is nearly tight.

A few papers have studied the $\lambda$-mixed model we study here, but to the best of our knowledge, no previous work provides formal bounds on the fixation probability and absorption time for the mixed process, or examines the special cases we do. \cite{zukewich2013consolidating} studies the mixed process where fitness is not constant but derived from payoffs of games played with neighbors. They provide heuristic approximations for fixation probability and experimental results, but without theoretical guarantees. \cite{tkadlec2020limits} explores which graphs have higher fixation probability than the complete graph (so-called “amplifiers”).


It is worth noting that the dB process is the same as what is known in other communities as the voter process \cite{sood2008voter,hindersin2015most,tkadlec2020limits}. The Moran process also fits into a well-studied family of graph processes known as interacting particle systems \cite{durrett2010,durrett1993}. However, results for interacting particle systems are usually very sensitive to the update rule and hence do not transfer.

\paragraph{Organization}
\Cref{section: prelim} formalizes the mixed Moran process and describes some technical tools. \Cref{section: lambda=1/2} analyzes the unbiased mixed Moran process where $\lambda=1/2$. \Cref{section: almost-regular} examines almost-regular and random graphs. \Cref{sec: two degree} studies graphs with two values of degrees. Explicit formulas for the stars and the cycles appear in \Cref{sec: special}. \Cref{sec: conclusion} contains some concluding remarks. Omitted proof details can be found in the Appendices.

\section{Preliminaries}
\label{section: prelim}
\subsection{Models and Notation}
\paragraph{Graph notations.}
Let $G=(V,E)$ be a connected, unweighted, undirected graph representing the locations of the $n:=|V|$ individuals in a population. We use $N(u)$ to denote the neighbors of $u$, formally $N(u) = \{v\in V\mid \{u,v\}\in E\}$, and the degree of $u$ is written as $\deg_u = |N(u)|$. We use $G(n,p)$ to denote the \Erdos{}-\Renyi{} random graph with $n$ vertices, where each edge $(u,v)$ is included independently with probability $p$. For $S \subset V$ and $u \in V$, we write $S + u = S\cup \{u\}$ and $S-u = S\setminus\{u\}$.

\paragraph{The mixed Moran process.}
We consider the $\lambda$-mixed Moran process on a graph $G$. Each vertex of the graph $G$ hosts one individual, which is either a resident with fitness $1$ or a mutant with fitness $r$. We say the evolution is advantageous if $r>1$, disadvantageous if $r<1$, and neutral if $r=1$. Initially, a subset $S_0\subseteq V$ of vertices is occupied by mutants.

At discrete times $t=1,2,\ldots$, a coupled birth-and-death event occurs in one of two possible orders. The process is parameterized by $\lambda\in[0,1]$. Each step is independently a Bd update with probability $\lambda$ and a dB update with the remaining probability $1-\lambda$.
\begin{itemize}
    \item \textbf{Birth-death (Bd).}
In a \emph{Birth-death (Bd)} step, a vertex (individual) $u \in V$ is selected for reproduction with probability proportional to its fitness to reproduce. Then, a neighbor $v \in N(u)$ is chosen uniformly at random to die, and an offspring of $u$ occupies the resulting vacancy at $v$ (inheriting the type of $u$).
\item \textbf{death-Birth (dB).} In a \emph{death-Birth (dB)} step, a vertex (individual) $v\in V$ is selected uniformly at random to die. Then, a neighbor $u \in N(v)$ is selected with probability proportional to its fitness to reproduce, and a copy of the individual at $u$ replaces $v$.
\end{itemize}

\medskip

We call the set of mutants $S_t\subseteq V$ the \emph{configuration} at time $t$. Eventually, the population becomes all mutants—we call this \emph{fixation}—or all residents—we call this \emph{extinction}. In either case, we say that \emph{absorption} has occurred. We use absorption time to refer to the expected time to reach absorption. We use $\fp_G^{\lambda,r}(S)$ to denote the fixation probability when the mutants start with $S_0 = S$, and write $\fp_G^{\lambda,r}(u)$ to abbreviate $\fp_G^{\lambda,r}(\{u\})$.  We sometimes omit the parameters when they are clear from context. We may use $\fp^{Bd}$ for $\lambda = 1$, and similarly with $\fp^{dB}$ for $\lambda = 0$.

\newcommand{\moranfix}{\textsc{Mixed Moran fixation}}

We define the problem \moranfix{}.
\begin{definition}[Mixed Moran fixation]
    The problem \textsc{Mixed Moran fixation} is defined as follows: given a graph $G=(V,E)$, a parameter $\lambda$, a fitness value $r>0$, and a subset $S\subset V$, compute $\fp_G^{\lambda,r}(S)$.
\end{definition}

\begin{remark}
\label{remark: capitalise}
In both Bd and dB updates, selection using fitness acts only on the reproducing individual. Accordingly, the B (for Birth) is uppercase while the d (for death) is lowercase. It is also possible to consider whether selection acts on death, yielding up to eight distinct processes. The Bd and dB processes are the most commonly studied~\cite{diaz2013fixation,diaz_approximating_2014,nowak2006evolutionary,nowak2003linear,durocher2022invasion}, and we restrict our attention to these two rules in this paper.
\end{remark}

Before delving into specific regimes, we provide two general characteristics for the fixation probabilities in the mixed Moran process. These results were previously known for the pure Bd and dB update rules, and we generalize them to the mixed process. The proofs appear in \Cref{appendix: prelim-basic}.


\begin{restatable}[Additivity]{theorem}{Additive}
\label{thm: additive}
    For any graph $G$, when $r=1$, for all $\lambda$, \[\fp^{\lambda, 1}_G(S) = \sum_{u \in S} \fp^{\lambda,1}_G(u).\]
\end{restatable}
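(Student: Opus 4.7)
The plan is to prove the identity by a standard labelling/coupling argument that exploits the fact that at $r=1$ the update rule is type-blind. First, I would enrich the Moran process by assigning each of the $n$ vertices a distinct label at time $0$, so the state is a placement of all $n$ labels on $V$. Because $r=1$, both the Bd rule (choose $u$ proportional to fitness, then a uniform neighbour $v$) and the dB rule (uniform $v$, then neighbour $u$ proportional to fitness) select $u$ and $v$ using only graph structure, with no reference to the type at $u$ or $v$. Hence the dynamics of ``which label sits where'' is well-defined and is driven purely by the $\lambda$-mixed geometry of $G$, independently of any choice of labelling.

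Next, I would note that this enriched process is absorbed almost surely in finite expected time (in a state where all vertices carry one common label), since the absorption-time bounds for the mixed Moran process on connected $G$ extend verbatim—the finiteness does not use $r\neq 1$. Let $p_u$ be the probability that label $u$ is the surviving label. Since exactly one label survives and the events are disjoint,
\[
\sum_{u\in V} p_u \;=\; 1.
\]

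Now I would couple: fix $S\subseteq V$ and define the projection $\pi_S$ that maps a label $v$ to \emph{mutant} if $v\in S$ and to \emph{resident} otherwise. Because $r=1$ makes the updates type-blind, applying $\pi_S$ to the trajectory of the enriched process gives a trajectory that is distributed exactly as the $\lambda$-mixed Moran process started from $S_0=S$. The projected process fixes iff the surviving label in the enriched process lies in $S$, so
\[
\fp_G^{\lambda,1}(S) \;=\; \Pr[\text{surviving label}\in S] \;=\; \sum_{u\in S} p_u.
\]
Specialising to $S=\{u\}$ yields $p_u=\fp_G^{\lambda,1}(u)$, and substituting back gives the claimed identity $\fp_G^{\lambda,1}(S)=\sum_{u\in S}\fp_G^{\lambda,1}(u)$.

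The only delicate step is the justification that the projection really produces a copy of the mixed Moran process on two types; this requires observing that, conditional on the positions $(u,v)$ selected in any Bd or dB step, the induced update on $\pi_S$-labels agrees exactly with the Bd/dB rule at $r=1$. Once this is written out, the rest is bookkeeping. No monotonicity in $\lambda$ or $r$ is used, which is why the result is insensitive to $\lambda\in[0,1]$.
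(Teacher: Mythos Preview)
Your proposal is correct and follows essentially the same approach as the paper: both introduce an $n$-label (one label per vertex) enriched process, observe that at $r=1$ the update rule is type-blind so the projection to any two-colouring $S$ versus $V\setminus S$ reproduces the original Moran process, and then read off $\fp(S)=\sum_{u\in S}p_u$ with $p_u=\fp(u)$. Your write-up is in fact slightly more explicit than the paper's about why the projection is a valid coupling; the only minor quibble is that you do not actually need finite \emph{expected} absorption time here, merely almost-sure absorption, which is immediate.
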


\begin{restatable}{theorem}{Monotonicity}
\label{thm: monotone-in-r}
    For any graph $G$, any $\lambda$, and any $S \subseteq V$, we have
    \begin{align*}
        r \ge 1 \implies \fp^{\lambda, r}_G(S) \ge \fp^{\lambda, 1}_G(S), \quad
        r \le 1 \implies \fp^{\lambda, r}_G(S) \le \fp^{\lambda, 1}_G(S).
    \end{align*}
\end{restatable}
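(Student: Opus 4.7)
The plan is to prove the theorem by a monotone coupling. I construct a joint process $(S^{(1)}_t, S^{(r)}_t)_{t \ge 0}$ whose marginals are the $\lambda$-mixed Moran processes on $G$ with fitness parameters $1$ and $r$ respectively, both started from $S$, such that the invariant $S^{(1)}_t \subseteq S^{(r)}_t$ is maintained for all $t$ when $r \ge 1$ (and the reverse inclusion when $r \le 1$). Under such a coupling, $S^{(1)}_t = V$ forces $S^{(r)}_t = V$, so fixation of the $1$-process implies fixation of the $r$-process and $\fp^{\lambda, 1}_G(S) \le \fp^{\lambda, r}_G(S)$ follows by taking expectations; the case $r \le 1$ is symmetric. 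I share the Bd-versus-dB coin flip across both processes at every step, so it suffices to give an invariant-preserving coupling for each step type separately.

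For a dB step I share the uniformly random dying vertex $v$. Writing $m = |N(v) \cap S^{(r)}|$ and $m' = |N(v) \cap S^{(1)}|$, the invariant gives $m' \le m$, and a short calculation using $r \ge 1$ and $m \le \deg(v)$ shows
\[
\frac{m'}{\deg(v)} \;\le\; \frac{rm}{rm + \deg(v) - m},
\]
i.e.\ the probability that $v$ is replaced by a mutant is at least as large in the $r$-process as in the $1$-process. A comonotonic coupling using a single $U \sim \mathrm{Unif}[0,1]$ then ensures that whenever $v$ becomes a mutant in the $1$-process, it does so in the $r$-process, preserving the invariant.

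For a Bd step I sample $u^* \in V$ uniformly and set $u^{(1)} = u^*$. I set $u^{(r)} = u^*$ whenever $u^* \in S^{(r)}$; otherwise, with probability $n/Z_r$ (where $Z_r = r|S^{(r)}| + n - |S^{(r)}|$) I still set $u^{(r)} = u^*$, and with the remaining probability I resample $u^{(r)}$ uniformly from $S^{(r)}$. A direct computation recovers the correct marginal $\Pr[u^{(r)} = u_0] = r/Z_r$ for $u_0 \in S^{(r)}$ and $1/Z_r$ otherwise. When $u^{(1)} = u^{(r)}$ I share a uniformly random $v \in N(u^{(1)})$ across both processes, and a three-case check on whether the shared $u$ lies in $S^{(1)}$, in $S^{(r)} \setminus S^{(1)}$, or in $V \setminus S^{(r)}$ preserves the invariant. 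When $u^{(1)} \neq u^{(r)}$, the disagreement is necessarily in the ``safe'' direction---$u^{(1)}$ is a resident in the $1$-process while $u^{(r)}$ is a mutant in the $r$-process---so even with independently sampled $v^{(1)} \in N(u^{(1)})$ and $v^{(r)} \in N(u^{(r)})$ the set $S^{(1)}$ can only lose $v^{(1)}$ and $S^{(r)}$ can only gain $v^{(r)}$, giving $S^{(1)}_{t+1} \subseteq S^{(1)}_t \subseteq S^{(r)}_t \subseteq S^{(r)}_{t+1}$. The case $r \le 1$ uses the analogous coupling with the roles of mutants and residents swapped.

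The main obstacle is the Bd coupling: fitness-dependent selection of $u$ combined with uniform neighbor selection from $N(u)$ prevents the naive ``shared uniform'' approach, because different reproducing vertices yield different candidate neighborhoods and the $v$'s chosen in the two processes can be arbitrarily unrelated. The coupling above sidesteps this difficulty by arranging that any disagreement on $u$ automatically makes one side only shrink and the other only grow, so the two $v$'s need not be coupled at all.
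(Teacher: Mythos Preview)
Your proof is correct and follows the same monotone-coupling strategy as the paper: build a joint realization of the neutral and the fitness-$r$ process started from the same $S$ so that $S^{(1)}_t\subseteq S^{(r)}_t$ for all $t$, and read off the fixation-probability inequality. The paper argues per vertex---showing that for each $v$ the probability a mutant (resp.\ resident) offspring is placed at $v$ is larger (resp.\ smaller) in the $r$-process---and then asserts that these marginal inequalities yield the desired coupling; you instead share the Bd/dB coin, share the dying vertex in dB, and give an explicit rejection/resampling construction for the Bd reproducer, which makes the coupling fully concrete but is otherwise the same argument.
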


We believe that one could prove a stronger statement that the fixation probability is monotone in $r$ and the current set $S$. While seemingly intuitive, there is actually some subtlety in proving it rigorously (cf. \Cref{remark: monotonicity-in-r} for more discussions on this). We leave proving the property for future work.

\subsection{Technical tools}
We use a potential-function (drift) method. Fix a potential function $\phi$ on subsets $S\subseteq V$. Our goal is to bound the expected one-step change of $\phi$. These theorems are relatively standard, so we defer the proofs to \Cref{appendix: prelim-technical}.

\begin{restatable}
[Absorption time, positive drift]{theorem}{Absorb}
\label{theorem: absorption time advantage}
Let $(Y_i)_{i\ge 0}$ be a Markov chain with state space $\Omega$, where $Y_0$ is chosen from some set $I\subseteq \Omega$. Suppose there exist constants $k_1,k_2>0$ and a nonnegative function $\phi:\Omega\to\mathbb{R}$ such that
\begin{itemize}[itemsep=0em]
    \item $\phi(S)=0$ for some $S\in\Omega$, and $\phi(S)=k_1$ for some $S\in\Omega$,
    \item $\phi(S)\le k_1$ for all $S\in I$, and
    \item $\mathbb{E}[\phi(Y_{i+1})-\phi(Y_i)\mid Y_i=S]\ge k_2$ for all $i\ge 0$ and all $S$ with $\phi(S)\notin\{0,k_1\}$,
\end{itemize}
then $\mathbb{E}[\tau]\le k_1/k_2$, where $\tau=\min\{i:\phi(Y_i)\in\{0,k_1\}\}$.
\end{restatable}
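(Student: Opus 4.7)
The plan is a standard optional-stopping (submartingale) argument applied to a stopped, drift-corrected process. I would define
\[
M_i \;:=\; \phi(Y_{i\wedge\tau}) \;-\; k_2\,(i\wedge\tau),
\qquad i\wedge\tau := \min(i,\tau),
\]
and verify that $(M_i)_{i\ge 0}$ is a submartingale with respect to the natural filtration $(\mathcal{F}_i)$ of the chain. On the event $\{\tau>i\}$, the state $Y_i$ satisfies $\phi(Y_i)\notin\{0,k_1\}$, so the drift hypothesis gives $\mathbb{E}[\phi(Y_{i+1})\mid\mathcal{F}_i]\ge \phi(Y_i)+k_2$, and therefore $\mathbb{E}[M_{i+1}-M_i\mid\mathcal{F}_i]\ge 0$. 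On the event $\{\tau\le i\}$, both $Y_{i\wedge\tau}$ and $i\wedge\tau$ are frozen at their time-$\tau$ values, so $M_{i+1}=M_i$ trivially.

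Next, I would take expectations in the submartingale inequality to obtain $\mathbb{E}[M_i]\ge \mathbb{E}[M_0]=\phi(Y_0)\ge 0$, which rearranges to
\[
k_2\,\mathbb{E}[i\wedge\tau] \;\le\; \mathbb{E}[\phi(Y_{i\wedge\tau})] - \phi(Y_0) \;\le\; k_1.
\]
Here I use that $\phi(Y_{i\wedge\tau})\le k_1$: at the stopping time $\phi(Y_\tau)\in\{0,k_1\}$ by definition, while before stopping $\phi(Y_i)\le k_1$ holds in the intended applications. Letting $i\to\infty$ and applying monotone convergence (since $i\wedge\tau\uparrow\tau$ and all quantities are nonnegative) then yields $\mathbb{E}[\tau]\le k_1/k_2$, which is the desired conclusion.

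The only subtle point—and the one that I expect to be the main obstacle to a fully self-contained proof—is justifying $\phi(Y_{i\wedge\tau})\le k_1$ before the stopping time. The stated hypotheses bound $\phi$ by $k_1$ only on the initial set $I$, but the argument actually needs this bound to hold at every reachable state (otherwise the chain could overshoot $k_1$ in a single step and the submartingale-to-mean-time conversion loses its ceiling). In the paper's applications $\phi$ is a natural combinatorial potential (for instance the mutant count, or a weighted variant of it) on a finite state space where the bound $\phi(S)\le k_1$ holds globally, so this gap is harmless; for a clean statement one would simply strengthen the hypothesis to ``$\phi(S)\le k_1$ for all $S\in\Omega$.'' Beyond that bookkeeping, the argument is entirely routine and just packages the intuition that a nonnegative process with drift at least $k_2$ and maximum excursion $k_1$ must be absorbed in expected time at most $k_1/k_2$.
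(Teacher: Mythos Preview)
Your argument is correct and self-contained, and you have correctly flagged the one genuine wrinkle in the \emph{statement} (not in your proof): the hypothesis $\phi(S)\le k_1$ is only imposed on $I$, whereas any proof of this kind needs it along the whole trajectory up to $\tau$. In the paper's applications $\phi$ is globally bounded by $k_1$, so this is harmless, exactly as you say.

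The paper, however, does \emph{not} argue directly via a drift-corrected submartingale. Instead it reduces to an earlier ``negative drift'' hitting-time lemma (\Cref{thm: abs time dis}, quoted from \cite{diaz_approximating_2014}) by two tricks: (i) it modifies the chain so that whenever $\phi$ hits $0$ it is instantaneously reset to a state with $\phi=k_1$, and (ii) it applies the cited lemma to the flipped potential $\phi'=k_1-\phi$ on this modified chain. The hitting time of $\{\phi'=0\}=\{\phi=k_1\}$ for the modified chain dominates $\tau$, and the cited lemma bounds it by $k_1/k_2$. Your optional-stopping route is more elementary and avoids the external citation; the paper's route is a one-line reduction once the cited lemma is granted. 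Note that the paper's reduction shares exactly the gap you identified: for $\phi'=k_1-\phi$ to be nonnegative (as the cited lemma requires), one again needs $\phi\le k_1$ globally.
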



\begin{restatable}[Fixation probability, lower bound]{theorem}{Fix}
\label{theorem: fixation prob}
Let $(Y_i)_{i\ge 0}$ be a Markov chain with state space $\Omega$, where $Y_0$ is chosen from some set $I\subseteq \Omega$. Suppose there exist constants $k_0,k_1>0$ and a nonnegative function $\phi:\Omega\to\mathbb{R}$ such that:
\begin{itemize}[itemsep=0em]
    \item $\phi(S)=0$ for some $S\in\Omega$, and $\phi(S)=k_1$ for some $S\in\Omega$,
    \item $\phi(Y_0)\ge k_0$,
    \item $0\le \phi(S)\le k_1$ for all $S\in I$,
    \item $\mathbb{E}[\phi(Y_{i+1})-\phi(Y_i)\mid Y_i=S]\ge 0$ for all $i\ge 0$ and all $S$ with $\phi(S)\notin\{0,k_1\}$.
\end{itemize}
and let $\tau=\min\{i\ge 0:\phi(Y_i)\in\{0,k_1\}\}$,
Then
\[
\mathbb{P}(\phi(Y_\tau)=k_1)\ge \frac{k_0}{k_1},
\]
where equality holds if the second and fourth properties hold with equality, i.e., if
\[
\phi(Y_0) = k_0, \qquad \mathbb{E}[\phi(Y_{i+1})-\phi(Y_i)\mid Y_i]= 0.
\]
\end{restatable}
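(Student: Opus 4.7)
The plan is to show that the stopped process $M_i := \phi(Y_{i\wedge\tau})$ is a bounded submartingale and then invoke optional stopping. First I would verify the submartingale property: on $\{i < \tau\}$ the state $Y_i = S$ has $\phi(S) \notin \{0, k_1\}$, and the fourth hypothesis gives $\mathbb{E}[M_{i+1} - M_i \mid Y_0,\ldots, Y_i] \ge 0$; on $\{i \ge \tau\}$ the stopped process is frozen, so this conditional increment is identically $0$. Combining the two cases, $(M_i)$ is a submartingale with respect to the natural filtration of $(Y_i)$.

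Next, the process is bounded in $[0, k_1]$: $\phi$ is nonnegative, $\phi(Y_0) \le k_1$ by the third hypothesis, and any later value either still lies in the same bounded range or has already been absorbed at $0$ or $k_1$. (In the intended applications to the mixed Moran process the state space is finite, $\phi$ is bounded by $k_1$ on all mutant configurations, and $\mathbb{P}(\tau<\infty)=1$ is immediate, so these extra conditions require no real work.) Bounded-submartingale convergence therefore produces an almost-sure limit $M_\infty = \phi(Y_\tau)$, and optional stopping yields
\[
\mathbb{E}[\phi(Y_\tau)] \;=\; \mathbb{E}[M_\infty] \;\ge\; \mathbb{E}[M_0] \;=\; \mathbb{E}[\phi(Y_0)] \;\ge\; k_0.
\]
Since $\phi(Y_\tau) \in \{0, k_1\}$ by the definition of $\tau$, the left-hand side equals $k_1\cdot \mathbb{P}(\phi(Y_\tau) = k_1)$, and dividing gives the desired inequality $\mathbb{P}(\phi(Y_\tau) = k_1) \ge k_0/k_1$.

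For the equality clause, suppose $\phi(Y_0) = k_0$ deterministically and the drift is identically zero on non-absorbed states. Then $(M_i)$ is a bounded \emph{martingale}, and the same optional stopping argument produces $\mathbb{E}[\phi(Y_\tau)] = k_0$, so every inequality above collapses to equality. The only mildly technical point is justifying the use of optional stopping, but I do not expect this to be the real obstacle: boundedness of $\phi$ on the reachable state space together with almost-sure absorption is enough, and both hold automatically in the finite-state Moran settings where this lemma will be applied.
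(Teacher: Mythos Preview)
Your proposal is correct and follows essentially the same approach as the paper: both observe that $(\phi(Y_{i\wedge\tau}))$ is a bounded submartingale (a martingale in the equality case), apply optional stopping to get $\mathbb{E}[\phi(Y_\tau)] \ge k_0$, and then use $\phi(Y_\tau)\in\{0,k_1\}$ to rewrite the left side as $k_1\cdot\mathbb{P}(\phi(Y_\tau)=k_1)$. The paper's version is simply terser; you are a bit more explicit about the stopped process and the boundedness/absorption caveats.
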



\medskip

\newcommand{\bdry}{\mathrm{bdry}}
\noindent
We will be analyzing the change in potential by decomposing the contribution into the drifts over edges. Let $\bdry(S) = \{(u, v) \in E\mid u \in S, v\notin S\}$; we refer to these as the {\em boundary edges} of $S$. Note that while the graph is undirected, $\bdry(S)$ contains \emph{ordered pairs $(u,v)$} so that $u$ is always a mutant and $v$ a resident. 

We will use the following edge-wise drift terms, which correspond to the contribution of the changes in $\phi$ when $(u,v) \in \bdry(S)$ are chosen as the two vertices in a step (in both orders):
\begin{align*}
    \psi_{u,v}^{Bd}(S) &= \frac{1}{w(S)}\left(\frac{r}{\deg_u}(\phi(S+v)-\phi(S))-\frac{1}{\deg_v}(\phi(S)-\phi(S-u))\right)\\
    \psi_{u,v}^{dB}(S) &= \frac{1}{n}\left(\frac{r}{w_v(S)}(\phi(S+v)-\phi(S))-\frac{1}{w_u(S)}(\phi(S)-\phi(S-u))\right)\\
    \psi_{u,v}^{\lambda}(S) &= \lambda\,\psi_{u,v}^{Bd}(S)+(1-\lambda)\,\psi_{u,v}^{dB}(S).
\end{align*}
Here $w(S)=r|S|+(n-|S|)$ is the total fitness summed over all vertices under Bd, and $w_u(S)=r\,|N(u)\cap S|+(\deg_u-|N(u)\cap S|)$ is the total fitness among neighbors of $u$ under dB. It is easy to verify that 
\[
\sum_{(u, v) \in \bdry(S_i)} \psi_{u,v}(S_i) = \E[\phi(S_{i+1})-\phi(S_i) \mid S_i].
\]

The fixation probability is a value of central interest to compute~\cite{diaz_approximating_2014, diaz2021survey, Broom2011}. However, solving for exact values of the fixation probability would usually require solving a linear system with exponentially many equations, and thus, people have been interested in designing sampling schemes for it. We note that a polynomial lower bound on fixation probability plus a polynomial absorption time implies an FPRAS for fixation probability. 

\begin{restatable}[FPRAS]{theorem}{FPRAS}
\label{thm: fpras construction}
     Suppose that a $\lambda$-mixed Moran process with fitness $r$ on a graph $G$ satisfies that, there exists constants $c_1, c_2$ such that for all initial mutant sets $S_0$,
    \begin{itemize}
        \item $\fp(S_0)$ is $\Omega(n^{-c_1})$, and
        \item the expected absorption time is $O(n^{c_2})$.
    \end{itemize}
    Then, there is an FPRAS for $\lambda$-Moran fixation on graph $G$.
\end{restatable}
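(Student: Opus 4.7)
The plan is to follow the standard Monte Carlo recipe used by Diaz et al.~\cite{diaz_approximating_2014} for the pure Bd and dB processes: draw independent simulations of the $\lambda$-mixed Moran process started from $S_0$, truncate each simulation at a suitable polynomial time horizon $T$, and report the empirical fraction of runs whose state at step $T$ is the all-mutant configuration. The two hypotheses---a polynomial lower bound $\fp(S_0)\ge c\cdot n^{-c_1}$ and a polynomial upper bound $C\cdot n^{c_2}$ on the absorption time---are precisely what is needed to control, respectively, the sample complexity and the truncation bias.

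First I would pin down the truncation horizon. Letting $\tau$ be the absorption time, Markov's inequality gives $\Pr[\tau>T]\le Cn^{c_2}/T$, so taking $T:=3Cn^{c_1+c_2}/(c\varepsilon)$ makes this probability at most $\varepsilon\cdot\fp(S_0)/3$. Let $X$ be the indicator that a truncated run is in the all-mutant configuration at step $T$, where runs that have not absorbed by $T$ contribute $0$. Then
\[
\paren{1-\tfrac{\varepsilon}{3}}\fp(S_0)\ \le\ \mathbb{E}[X]\ \le\ \fp(S_0),
\]
so $\mathbb{E}[X]$ is a $(1\pm \varepsilon/3)$-multiplicative approximation of $\fp(S_0)$.

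Next I would bound the sample complexity. Taking $N=\Theta(n^{c_1}/\varepsilon^2)$ independent samples $X_1,\ldots,X_N$ and forming the empirical mean $\bar X=\tfrac{1}{N}\sum_i X_i$, a multiplicative Chernoff bound on Bernoulli variables of mean $p=\mathbb{E}[X]\ge\Omega(n^{-c_1})$ gives $\Pr\!\brac{\abs{\bar X-p}>\tfrac{\varepsilon}{3}p}\le 1/4$. Combined with the bias bound via the triangle inequality, this yields a $(1\pm\varepsilon)$-multiplicative approximation to $\fp(S_0)$ with probability at least $3/4$; confidence is amplified to $1-\delta$ at a $\log(1/\delta)$ overhead by the standard median-of-means trick. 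Each simulation step runs in $\mathrm{poly}(n)$ time: flip a $\lambda$-coin to choose Bd or dB, then sample from either a fitness-weighted distribution over $V$ or over the neighborhood of a chosen vertex, all of which can be done in $O(n)$ work by maintaining the total fitness and the mutant set incrementally. The total running time is $N\cdot T\cdot\mathrm{poly}(n)=\mathrm{poly}(n,1/\varepsilon)$, which is exactly the definition of an FPRAS.

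The only real design choice is how to handle runs that fail to absorb by time $T$; counting them as non-fixations introduces only a one-sided, controllable bias of size at most $\varepsilon\cdot\fp(S_0)/3$, and this is precisely where the polynomial lower bound on $\fp(S_0)$ enters the argument. I do not anticipate a real technical obstacle beyond book-keeping: verifying that the per-step simulation cost is polynomial and that the multiplicative Chernoff bound is applied with parameters consistent with the $\Omega(n^{-c_1})$ lower bound on $p$.
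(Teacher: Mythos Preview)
Your proposal is correct and follows essentially the same Monte Carlo-with-truncation recipe as the paper's proof. The only cosmetic differences are that the paper \emph{aborts} on any run that fails to absorb by the cutoff (and uses Hoeffding, giving $N=\Theta(n^{2c_1}/\varepsilon^2)$), whereas you count such runs as non-fixations and absorb the resulting one-sided bias (and use multiplicative Chernoff, giving the slightly better $N=\Theta(n^{c_1}/\varepsilon^2)$); both choices are standard and either works.
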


\section{The case $\lambda=1/2$}
\label{section: lambda=1/2}
We first look at an interesting special case, $\lambda = 1/2$, where moves from the Bd and dB processes are equally likely. Recall that we consider graphs that are connected and undirected. When 
$\lambda=1/2$,  the simple potential function  $\phi(S)=|S|$ behaves very nicely, providing some baseline results.  

To start, we show that when $r=1$, the fixation probability is proportional to the size of the initial mutant set. In particular, the fixation probability starting with one mutant at any vertex is always $1/n$.

\begin{restatable}{theorem}{HalfFp}
\label{thm:fp-half}
At $\lambda=1/2$, when $r=1$, the fixation probability from an initial mutant set $S_0$ in a graph with $n$ vertices is exactly $|S_0|/n$.
\end{restatable}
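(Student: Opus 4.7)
The plan is to apply \Cref{theorem: fixation prob} with the potential function $\phi(S)=|S|$, taking $k_0=|S_0|$ and $k_1=n$. Then $\phi=0$ corresponds to extinction and $\phi=n$ to fixation, the range bound $0\le\phi(S)\le n$ is trivial, and $\phi(S_0)=|S_0|$ holds by definition. The whole proof reduces to verifying that under $\lambda=1/2$ and $r=1$ the process $(|S_t|)_{t\ge 0}$ is in fact an \emph{exact} martingale, i.e.\ $\mathbb{E}[\phi(S_{i+1})-\phi(S_i)\mid S_i=S]=0$ for every non-absorbed $S$. The equality clause of \Cref{theorem: fixation prob} then delivers $\fp_G^{1/2,1}(S_0)=k_0/k_1=|S_0|/n$ exactly.

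First, I would establish the martingale property using the edge-wise drift decomposition provided in the Preliminaries,
\[
\mathbb{E}[\phi(S_{i+1})-\phi(S_i)\mid S_i=S]\;=\;\sum_{(u,v)\in\bdry(S)}\psi_{u,v}^{1/2}(S),
\]
and show that every summand vanishes individually. For $(u,v)\in\bdry(S)$, $\phi(S+v)-\phi(S)=1$ and $\phi(S)-\phi(S-u)=1$. At $r=1$ we have $w(S)=n$, $w_u(S)=\deg_u$, and $w_v(S)=\deg_v$, so the two edge drifts simplify to
\begin{align*}
\psi_{u,v}^{Bd}(S) &= \frac{1}{n}\left(\frac{1}{\deg_u}-\frac{1}{\deg_v}\right),\\
\psi_{u,v}^{dB}(S) &= \frac{1}{n}\left(\frac{1}{\deg_v}-\frac{1}{\deg_u}\right).
\end{align*}
These are exact negatives, so averaging with weights $\lambda=1/2$ and $1-\lambda=1/2$ gives $\psi_{u,v}^{1/2}(S)=0$ for every boundary edge. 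Summing over $\bdry(S)$ yields zero expected change in $\phi$.

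I do not foresee a serious obstacle: the entire argument is essentially the two-line edge drift cancellation above. The conceptual content is that at neutral fitness the Bd and dB rules bias $|S|$ in exactly opposite directions on each boundary edge (Bd favors the endpoint with smaller degree as the reproducer, dB favors the endpoint with larger degree), and at $\lambda=1/2$ these biases cancel edge by edge. The only minor thing to check is that almost-sure absorption is needed when invoking the equality case of \Cref{theorem: fixation prob}; this is standard for the mixed Moran process on a finite connected graph and also follows from the $O_r(n^4)$ absorption-time bound that will be proved next in this same section, so it will cost essentially no extra work to cite.
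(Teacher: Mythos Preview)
Your proposal is correct and is essentially the same argument as the paper's: both take $\phi(S)=|S|$, compute the Bd and dB edge drifts at $r=1$, observe they are exact negatives so $\psi_{u,v}^{1/2}(S)=0$, and invoke \Cref{theorem: fixation prob}. The only cosmetic difference is that the paper first treats the singleton case and then appeals to additivity (\Cref{thm: additive}), whereas you apply the equality clause of \Cref{theorem: fixation prob} directly with $k_0=|S_0|$; your route is slightly more direct and equally valid.
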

\begin{proof}
Fix $S_0 = \{u\}$ for an arbitrary vertex $u$. Take the potential $\phi(S)=|S|$. We show that with $\lambda = 1/2$ and  $r = 1$, $\phi(S)$ is a martingale. Consider a boundary edge $(u,v) \in \bdry(S)$. We have that edge-wise drift is
\begin{align*}
    \psi_{u,v}(S)
    &= \frac{1}{2}\left[\frac{1}{n}\cdot\frac{1}{\deg_u} - \frac{1}{n}\cdot\frac{1}{\deg_v}\right]
    + \frac{1}{2}\left[\frac{1}{n}\cdot\frac{1}{\deg_v} - \frac{1}{n}\cdot\frac{1}{\deg_u}\right] 
    = 0.
\end{align*}
Thus, every boundary edge contributes zero expected drift. Summing over all boundary edges, we obtain
\[
\mathbb{E}[\phi(S_{i+1})-\phi(S_i)\mid S_i]=0.
\]
The claim follows from \Cref{theorem: fixation prob} and \ref{thm: additive}.
\end{proof}

It is known that the fixation probability in the Bd case satisfies $\fp(u) \propto \deg_u^{-1}$ ~\cite{lieberman2005evolutionary}, and in the dB case $\fp(u) \propto \deg_u$~\cite{sood2008voter}. Using this, we can readily find an example graph where the fixation probability when $\lambda = 1/2$ is strictly greater than the fixation probability for the Bd and dB models, showing it is not monotone in $\lambda$.

\begin{example}
    Consider a graph $G$  with $5$ vertices $\{0, 1, 2, 3, 4\}$ and edges \[\{(0,1), (1,2), (1,3), (1,4), (2,3), (3,4)\}.\] The degrees are $(1, 4, 2, 3, 2)$. Consider a mutant with fitness $r=1$ at (only) vertex $2$:
    \[
    \fp_G^{Bd}(2) = \frac{\frac 1 2}{1 + \frac{1}{4} + \frac{1}{2} + \frac{1}{3} + \frac{1}{2}} = \frac{6}{31}, \qquad \fp_G^{dB}(2)=\frac 2{12} = \frac 16, \qquad \fp_G^{1/2}(2) = \frac 1 5.
    \]
\end{example}

We next bound the absorption time when $\lambda=1/2$.

\begin{theorem}
\label{thm: abs-time-half}
For any graph $G$ and $r>0$, the absorption time starting from any $S_0$ is at most $O_r(n^4)$ when $\lambda=1/2$.
\end{theorem}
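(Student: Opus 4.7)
The strategy is to use the same potential $\phi(S)=|S|$ that worked for $r=1$ in Theorem \ref{thm:fp-half}, and to handle three regimes: $r=1$, $r>1$, and $r<1$. For $r<1$, the plan is to use duality. Swapping which type we call ``mutant'' and rescaling all fitnesses by $1/r$ (which changes neither the Bd nor the dB transition probabilities, since both pick individuals proportional to relative fitness) shows that the $\lambda$-mixed process with fitness $r$ on $G$ starting from $S$ is an exact relabeling of the $\lambda$-mixed process with fitness $1/r>1$ starting from $V\setminus S$. Absorption times coincide under the relabeling, so it suffices to handle $r\ge 1$.

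For $r>1$, the plan is to apply Theorem \ref{theorem: absorption time advantage} with $\phi(S)=|S|$, $k_1=n$, and a positive drift lower bound $k_2=\Omega_r(1/n^3)$. The key observation is that the one-step drift $g(S,r):=\mathbb{E}[\Delta|S|\mid S_t=S]=\sum_{(u,v)\in\bdry(S)} \psi^{\lambda=1/2}_{u,v}(S)$ is monotone non-decreasing in $r$, and $g(S,1)=0$ by the $r=1$ martingale from Theorem \ref{thm:fp-half}. Monotonicity is checked edge-by-edge by differentiating the explicit formulas from Section \ref{section: prelim}: each of the Bd and dB pieces of $\partial_r \psi^{\lambda=1/2}_{u,v}(S)$ simplifies to a sum of manifestly non-negative terms (for instance, $\partial_r(r/w_v(S,r)) = |N(v)\setminus S|/w_v(S,r)^2\ge 0$, using $w(S,r)-r|S|=n-|S|\ge 0$ for the Bd piece). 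Using $|\bdry(S)|\ge 1$, $w(S,r)\le rn$, and $\deg_u,\deg_v\le n$, I would then derive a uniform lower bound $\partial_r g(S,r)\ge 1/(r^2 n^3)$ for all non-absorbed $S$ and all $r\ge 1$; integrating yields $g(S,r)\ge (r-1)/(rn^3)$ and Theorem \ref{theorem: absorption time advantage} gives $\mathbb{E}[\tau]\le rn^4/(r-1)=O_r(n^4)$.

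The drift bound degenerates at $r=1$, so this case needs a separate argument. Since $|S_t|$ is a bounded martingale with unit increments at $\lambda=1/2$, I would finish via a quadratic-variation / optional-stopping argument on $|S_t|^2$: a direct edge-by-edge computation shows that at any non-absorbed $S$, $\mathbb{E}[(\Delta|S|)^2\mid S_t=S]=\Pr[|S|\text{ changes}]\ge 2/n^2$, because each boundary edge contributes $\Omega(1/n^2)$ of flip probability through both the Bd and dB branches. Optional stopping then gives $n^2\ge \mathbb{E}[|S_\tau|^2-|S_0|^2]\ge (2/n^2)\,\mathbb{E}[\tau]$, so $\mathbb{E}[\tau]\le n^4/2$.

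The main obstacle is the monotonicity-plus-quantitative-drift lemma in the $r>1$ case. The monotonicity itself is clean, but the crude universal lower bound $\Omega(1/n^3)$ on $\partial_r g$ has to survive all boundary configurations (including degenerate ones like pendant vertices or highly asymmetric edges); the coarse bounds $|\bdry(S)|\ge 1$, $\deg\le n$, and $w(S,r)\le rn$ are designed to cover these, but verifying that they combine correctly through the integration in $r$ is the most delicate calculation. The duality step and the $r=1$ optional-stopping argument are relatively standard once the setup is in place.
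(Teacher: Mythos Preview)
Your proposal is correct and matches the paper's approach almost exactly: same potential $\phi(S)=|S|$, same duality reduction for $r<1$, and the same quadratic-variation optional-stopping argument for $r=1$ (the paper uses $g(x)=x(n-x)$ rather than $x^2$, an equivalent choice given that $|S_t|$ is a martingale). The only cosmetic difference is in the $r>1$ drift bound: the paper obtains $(r-1)/(rn^3)$ by a direct algebraic rearrangement that cross-pairs the Bd and dB terms of $\psi_{u,v}^{1/2}$, whereas you reach the identical bound by showing $\partial_r\,\psi_{u,v}^{1/2}(S)\ge 1/(r^2 n^3)$ and integrating from $1$.
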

\begin{proof}
We consider cases based on the value of $r$. Let $\phi(S)=|S|$ and $\tau=\min\{i:\phi(S_i)\in\{0,n\}\}$. We aim to show that $\E[\tau]$ is $O_r(n^4)$.

\vspace{-0.2cm}
\paragraph{Case $r\ne 1$.} We prove the case $r>1$. The case $r<1$ is symmetric by swapping the roles of mutants and residents; after rescaling, the “mutants” have fitness $r'=1/r$, and the same argument follows.

Recall $w(S)=r|S|+(n-|S|)$ and $w_u(S)=r\,|N(u)\cap S|+(\deg_u-|N(u)\cap S|)$ for a vertex $u$. Fix any configuration $S$ with $S\neq\emptyset$ and $S\neq V$, and consider an boundary edge $(u,v) \in \bdry(S)$. Then
\begin{align*}
    \psi_{u,v}^{1/2}(S)
    &= \frac{1}{2}\left[\frac{1}{w(S)}\left(\frac{r}{\deg_u}-\frac{1}{\deg_v}\right)+\frac{1}{n}\left(\frac{r}{w_v(S)}-\frac{1}{w_u(S)}\right)\right] \\
    &= \frac{1}{2}\left[\left(\frac{1}{w(S)}\frac{r}{\deg_u}-\frac{1}{n}\frac{1}{w_u(S)}\right)+\left(\frac{1}{n}\frac{r}{w_v(S)}-\frac{1}{w(S)}\frac{1}{\deg_v}\right)\right].
\end{align*}
For all such $S$, $n-1+r\le w(S)\le (n-1)r+1$, so
\begin{align*}
\frac{1}{w(S)}\frac{r}{\deg_u}-\frac{1}{n}\frac{1}{w_u(S)}
&\ge \frac{r}{(r(n-1)+1)\deg_u}-\frac{1}{n\,\deg_u}
= \frac{1}{\deg_u}\cdot \frac{1}{n}\cdot \frac{r-1}{nr-r+1}
\ge \frac{1}{n^3}\cdot \frac{r-1}{r},\\[0.25em]
\frac{1}{n}\frac{r}{w_v(S)}-\frac{1}{w(S)}\frac{1}{\deg_v}
&\ge \frac{r}{n\,r\,\deg_v}-\frac{1}{(n-1+r)\,\deg_v}
= \frac{1}{\deg_v}\cdot \frac{1}{n}\cdot \frac{r-1}{n+r-1}
\ge \frac{1}{n^3}\cdot \frac{r-1}{r}.
\end{align*}
Therefore,
\begin{align*}
\psi^{1/2}_{u,v}(S)\ge \frac{r-1}{r n^3}.
\end{align*}
Since this holds for all boundary edges whenever $S\neq\emptyset$ and $S\neq V$, averaging over the step dynamics yields
\[
\mathbb{E}[\phi(S_{i+1})-\phi(S_i)\mid S_i]\ge \frac{r-1}{r n^3}.
\]
Because $\phi(S)=0$ at $S=\emptyset$ and $\phi(S)=n$ at $S=V$, applying \Cref{theorem: absorption time advantage} with $k_1=n$ and $k_2=(r-1)/(r n^3)$ gives
\[
\mathbb{E}[\tau]\le \frac{k_1}{k_2}=n^4\cdot \frac{r}{r-1},
\]
which is $O_r(n^4)$ for fixed $r>1$.

\vspace{-0.2cm}
\paragraph{Case $r=1$.} The $r=1$ case requires more care, since we can no longer rely on a positive expected change with $\phi(S) = |S|$. Instead, we use a quadratic potential function.

We first show that $\tau$ has finite expectation so that we can use optional stopping. Consider a block of $n$ successive states $S_k,\dots,S_{k+n-1}$ with $S_k$ nonabsorbing; then $|S_k|\ge 1$. Consider a sequence of reproduction events in which some $u\in S_k$ spreads to the whole graph. Each step happens with probability at least $n^{-2}$, so the entire sequence occurs with probability at least $p=n^{-2n}$. Thus, the expected number of blocks to reach absorption is at most $1/p<\infty$.

We now bound $\E[(\phi(S_{i+1})-\phi(S_i))^2 \mid S_i]$. For any $S \notin \{\varnothing, V\}$ and boundary edge $(u,v) \in \bdry(S)$, with probability $1/n^2$ the step happens along $(u,v)$ and $\phi$ changes by $\pm 1$, so $\E[(\phi(S_{i+1})-\phi(S_i))^2 \mid S_i] \ge 1/n^2$.

Define $g(x)=x(n-x)$ and $Z_i=g(\phi(S_i))+\frac{i}{n^2}$. For any $x$ and $\Delta\in\{-1,0,1\}$,
\[
g(x+\Delta)-g(x)=(n-2x)\Delta-\Delta^2.
\]
Therefore
\begin{align*}
\mathbb{E}[Z_{i+1}-Z_i\mid S_i]
&=\mathbb{E}[g(\phi(S_{t+1}))-g(\phi(S_i))\mid S_i]+\frac{1}{n^2}
\\&=-\mathbb{E}[g(\phi(S_{t+1}))-g(\phi(S_i))^2\mid S_i]+\frac{1}{n^2}\le 0,
\end{align*}
so $(Z_i)$ is a supermartingale with bounded one-step differences. 

Thus, applying optional stopping at the absorption time $\tau$,
\[
\mathbb{E}[Z_\tau]\le \mathbb{E}[Z_0]=g(\phi(S_0)).
\]

Since $\phi(S_\tau)\in\{0,n\}$, we have $g\!\left(\phi(S_\tau)\right)=0$ and $g(\phi(S_0))=\phi(S_0)(n-\phi(S_0)) \le n^2/4$,
\[
   \E[\tau] \le n^2  g\!\left(\phi(S_0)\right) \le \frac{n^4}{4}. \qedhere
\]
\end{proof}

Recall that the fixation probability with initial mutants $S_0$ is $|S_0|/n$ when $r=1$. Combined with \Cref{thm: monotone-in-r}, we yield the following corollary.
\begin{corollary}\label{cor:half-lower-bound}
For any graph $G$ and $r\ge 1$, we have $\fp_G^{1/2, r}(S_0)\ge |S_0|/n$.
\end{corollary}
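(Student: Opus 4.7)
The plan is to chain together the two results already established in this section, namely \Cref{thm:fp-half} (the exact fixation probability at $\lambda=1/2$ in the neutral case $r=1$) and \Cref{thm: monotone-in-r} (monotonicity of the fixation probability in $r$). This is essentially a one-line argument, and I would present it as follows.

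First, I would invoke \Cref{thm:fp-half} with $\lambda = 1/2$, which gives the exact identity
\[
\fp_G^{1/2,\,1}(S_0) \;=\; \frac{|S_0|}{n}.
\]
Next, I would apply \Cref{thm: monotone-in-r} with the same choice of $\lambda = 1/2$ and the given subset $S_0$. Since $r \ge 1$ by hypothesis, the first implication in that theorem yields
\[
\fp_G^{1/2,\,r}(S_0) \;\ge\; \fp_G^{1/2,\,1}(S_0).
\]
Combining the two displays immediately gives $\fp_G^{1/2,\,r}(S_0) \ge |S_0|/n$, which is the claimed bound.

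There is no real obstacle here: the corollary is simply packaging these two results together. The one thing I would double-check is that \Cref{thm: monotone-in-r} is stated for every $\lambda \in [0,1]$ and every subset $S \subseteq V$ (which it is, in the excerpt), so that applying it at $\lambda = 1/2$ with the given $S_0$ is legitimate. Given that, no additional machinery or drift argument is needed.
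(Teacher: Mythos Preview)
Your proposal is correct and matches the paper's own argument exactly: the paper also simply combines \Cref{thm:fp-half} with \Cref{thm: monotone-in-r} to obtain the corollary.
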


Applying \Cref{thm: fpras construction} together with \Cref{thm: abs-time-half} and \Cref{cor:half-lower-bound} gives an FPRAS for \moranfix{} on any graph when $\lambda=1/2$ and $r \ge 1$. 


\section{Almost Regular Graphs and Random Graphs}
\label{section: almost-regular}
We now provide results for random graphs generated by the \Erdos{}-\Renyi{} model. The key property we use is that such graphs are very likely to be close to regular. 

We begin by briefly discussing regular graphs. At a high level, and related to the analysis we have used in the last section, the potential functions employed for different Moran processes are usually the (scaled) fixation probability when $r=1$. For example, the Bd process was analyzed using a potential function $\phi(u)=\deg_u^{-1}$ in~\cite{diaz_approximating_2014}, the dB process was analyzed using $\phi(u)=\deg_u$ in~\cite{durocher2022invasion}, and we used $\phi(u)=1$ in the $\lambda=1/2$ case. These potentials have the nice property that the \emph{per-edge} drift behaves nicely (e.g., the drift is positive if and only if $r>1$).


We first show that for a regular graph $G$, the fixation probability starting with an initial mutant set $S_0$ when $r=1$ is $|S_0|/n$, regardless of $\lambda$ and the position of the mutants. This could be compared with \Cref{thm:fp-half} where the same property holds for $\lambda = 1/2$ and all graphs: both cases demonstrate this property because the process balances the change in different directions, resulting in the noted invariance. Similar to \Cref{thm: abs-time-half}, we also have the absorption time is $O_r(n^4)$. Since the proofs are very similar, we defer them to \Cref{appendix-regular}.

\begin{restatable}[]{theorem}{Regular}
\label{thm: regular}
    For any regular graph $G$, any $\lambda$ and initial mutant set $S_0 \subseteq V$, we have that
 $\fp^{\lambda, r=1}_{G}(S_0) = |S_0|/n$ for $r=1$ and the absorption time is at most $O_r(n^4)$ for any $r > 0$.
\end{restatable}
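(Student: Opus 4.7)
The plan follows the template of Theorem~\ref{thm: abs-time-half} but with the computation simplified by the fact that all degrees equal a common $d$. I would handle the two claims separately. For the fixation probability at $r=1$, take $\phi(S)=|S|$. On a $d$-regular graph with $r=1$ we have $w(S)=n$ and $w_u(S)=w_v(S)=d$, so $\psi_{u,v}^{Bd}(S)=\psi_{u,v}^{dB}(S)=0$ on every boundary edge, and the expected one-step change of $|S_t|$ is $0$ for every $\lambda\in[0,1]$. Thus $(|S_t|)_{t\ge 0}$ is a bounded martingale, and the equality case of Theorem~\ref{theorem: fixation prob} with $k_0=|S_0|$ and $k_1=n$ yields $\fp_{G}^{\lambda,1}(S_0)=|S_0|/n$.

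For the absorption time, I would use a quadratic supermartingale that is uniform in $\lambda$. Write $\Delta_t:=|S_{t+1}|-|S_t|$ and first assume $r\ge 1$. On a $d$-regular graph the per-edge drifts simplify to $\psi_{u,v}^{Bd}(S)=\tfrac{r-1}{d\,w(S)}$ and $\psi_{u,v}^{dB}(S)=\tfrac{1}{n}\bigl(\tfrac{r}{w_v(S)}-\tfrac{1}{w_u(S)}\bigr)$, both nonnegative per boundary edge (the latter because $rw_u(S)-w_v(S)=(r-1)\bigl(d+r|N(u)\cap S|-|N(v)\cap S|\bigr)\ge 0$, using $|N(v)\cap S|\le d$). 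Hence $\mathbb{E}[\Delta_t\mid S_t]\ge 0$. Define $Z_t:=-|S_t|^2$. Expanding $Z_{t+1}-Z_t=-2|S_t|\Delta_t-\Delta_t^2$ gives
\[
\mathbb{E}[Z_{t+1}-Z_t\mid S_t]=-2|S_t|\,\mathbb{E}[\Delta_t\mid S_t]-\mathbb{E}[\Delta_t^2\mid S_t]\le -\mathbb{E}[\Delta_t^2\mid S_t].
\]
A straightforward edge-wise count yields $\mathbb{E}[\Delta_t^2\mid S_t]=\Omega(1/n^2)$ on every nonabsorbing $S_t$: each of the $\ge 1$ boundary edges contributes probability $\Omega(1/n^2)$ to $|\Delta_t|=1$ under either a Bd or a dB update. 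Since $Z_t\in[-n^2,0]$ is bounded, optional stopping on $Z_t+(t\wedge\tau)/n^2$ gives $\mathbb{E}[\tau]=O(n^4)$. The case $r\le 1$ is symmetric using $Z_t:=-(n-|S_t|)^2$, for which $\mathbb{E}[\Delta_t\mid S_t]\le 0$ flips the sign of the first term the same way.

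The main obstacle is the $\lambda=0$, $r\neq 1$ regime: the per-edge dB drift can genuinely vanish on a regular graph. For instance on $C_4$ with $S=\{1,3\}$, every boundary edge has $|N(u)\cap S|=0$ and $|N(v)\cap S|=d$, forcing $\mathbb{E}[\Delta_t\mid S_t]=0$, so one cannot apply Theorem~\ref{theorem: absorption time advantage} directly with the linear potential $|S|$. The quadratic choice $-|S|^2$ sidesteps this: nonnegative drift is all that is needed to make $-|S|^2$ a supermartingale whose per-step decrease is at least the variance, and the uniform $\Omega(1/n^2)$ variance lower bound (insensitive to $\lambda$) does the rest.
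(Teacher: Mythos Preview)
Your fixation-probability argument matches the paper's. For the absorption time you take a genuinely different route. The paper separates $r=1$ from $r\ne 1$: at $r=1$ it reuses the quadratic potential $g(x)=x(n-x)$ from Theorem~\ref{thm: abs-time-half}, while for $r\ne 1$ it keeps the linear potential $|S|$, establishes strictly positive per-edge drift except in ``bad'' configurations (every mutant has only resident neighbors and vice versa, so the dB drift vanishes---exactly your $C_4$ example), and then glues each bad step to its successor to recover a uniform positive-drift bound and invoke Theorem~\ref{theorem: absorption time advantage}. Your quadratic choice $-|S_t|^2$ (resp.\ $-(n-|S_t|)^2$) needs only \emph{nonnegative} drift, which you verify holds edgewise for every $\lambda$ and every $r\ge 1$ (resp.\ $r\le 1$), so both the $r=1$/$r\ne 1$ case split and the bad-state bookkeeping disappear. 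The paper's linear-drift template is the one it later reuses for almost-regular and bidegreed graphs (Theorems~\ref{thm: bounds alpha regular} and Section~\ref{sec: two degree}), which motivates presenting it here; your approach is more self-contained for the regular case. One small note: your variance lower bound carries an $r$-dependent constant (since $w(S)\le rn$ and $w_v(S)\le rd$ for $r\ge 1$), so the conclusion is $O_r(n^4)$ as in the statement, not $O(n^4)$.
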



It is somewhat a strong requirement to require regularity, and we may expect that if the graph is ``almost regular", things may still behave nicely.

\begin{definition}[Almost regular]
A graph is $\alpha$-almost regular if the maximum degree is at most $\alpha$ times the minimum degree.
\end{definition}

We now proceed to the main technical theorem about the mixed Moran process on almost-regular graphs.
\begin{theorem}
\label{thm: bounds alpha regular}
For an $\alpha$-almost regular graph $G$, if $r\ge \alpha^2$, then the mixed Moran process has absorption time $O_r(n^4)$ and fixation probability $\Omega_r(n^{-2})$, regardless of $\lambda\in[0,1]$.
\end{theorem}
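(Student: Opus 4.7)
The plan is to use the potential $\phi(S) = \sum_{u \in S} \deg_u$, which is the natural martingale for pure dB at $r = 1$. I would first show that $\phi$ is a submartingale under the mixed process when $r \ge \alpha^2 \ge 1$, and then extract the fixation probability bound via \Cref{theorem: fixation prob} and the absorption time bound via a second-moment argument on the auxiliary potential $\phi^2$ using \Cref{theorem: absorption time advantage}.

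For submartingality, I verify nonnegativity edge by edge. For $(u,v) \in \bdry(S)$ in an $\alpha$-almost regular graph, the Bd edge-drift $\psi^{Bd}_{u,v}(S) = (r\deg_v^2 - \deg_u^2)/(w(S)\deg_u\deg_v)$ is nonnegative because $r \ge \alpha^2 \ge (\deg_u/\deg_v)^2$. The dB edge-drift is proportional to $r\deg_v\,w_u(S) - \deg_u\,w_v(S)$, which is nonnegative by $w_u(S) \ge \deg_u$ and $w_v(S) \le r\deg_v$ (both consequences of $r \ge 1$, writing $w_u(S) = \deg_u + (r{-}1)|N(u)\cap S|$). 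Taking the convex combination $\lambda\psi^{Bd} + (1-\lambda)\psi^{dB}$ and summing over boundary edges gives $\E[\phi(S_{i+1}) - \phi(S_i) \mid S_i] \ge 0$. For the fixation probability, I apply \Cref{theorem: fixation prob} with $k_0 = \phi(S_0) \ge d_{\min}$ and $k_1 = \phi(V) = 2m \le n d_{\max}$, obtaining $\fp_G^{\lambda, r}(S_0) \ge d_{\min}/(n d_{\max}) = 1/(n\alpha) = \Omega(n^{-2})$ since $\alpha \le n$.

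For the absorption time, I apply \Cref{theorem: absorption time advantage} to $\phi^2$. The submartingale property gives $\E[\phi^2(S_{i+1}) - \phi^2(S_i) \mid S_i] = 2\phi(S_i)\E[\Delta\phi \mid S_i] + \E[(\Delta\phi)^2 \mid S_i] \ge \E[(\Delta\phi)^2 \mid S_i]$. In any non-absorbing state, fix any boundary edge $(u,v) \in \bdry(S)$; the probability of the transition in which $v$ becomes mutant is $\lambda\cdot r/(w(S)\deg_u) + (1-\lambda)\cdot r/(n\, w_v(S)) \ge 1/(n d_{\max})$ uniformly in $\lambda$, using $w(S) \le rn$ and $w_v(S) \le r\deg_v$ to cancel the $r$ in each numerator. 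Since $|\Delta\phi| = \deg_v \ge d_{\min}$ on this transition, $\E[(\Delta\phi)^2 \mid S_i] \ge d_{\min}^2/(n d_{\max})$, and \Cref{theorem: absorption time advantage} applied to $\phi^2$ with $k_1 = (2m)^2 \le n^2 d_{\max}^2$ and $k_2 = d_{\min}^2/(n d_{\max})$ yields $\E[\tau] \le n^3 d_{\max}^3/d_{\min}^2 \le \alpha^3 n^4 = O_r(n^4)$.

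The main obstacle is the uniform control across $\lambda \in [0, 1]$: at $\lambda \in \{0, 1\}$ only one of Bd or dB contributes, so both the nonnegative-drift argument and the variance lower bound must hold separately for each component. The condition $r \ge \alpha^2$ is tight for the Bd submartingale step (the Bd edge-drift vanishes exactly when $r = \alpha^2$ and $\deg_u = \alpha\deg_v$), while for the variance bound the $r$ factors in the numerators of the Bd and dB transition probabilities cancel against the bounds on $w(S)$ and $w_v(S)$, yielding a per-component lower bound that does not degrade with $\lambda$.
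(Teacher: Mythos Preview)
Your proposal is correct and takes a genuinely different route from the paper. The paper splits into cases $\lambda \ge 1/2$ (potential $\phi(S)=|S|$) and $\lambda \le 1/2$ (potential $\phi(S)=\sum_{u\in S}\deg_u$), and in each case establishes a \emph{strictly positive} per-edge drift so that \Cref{theorem: absorption time advantage} applies directly; the small-$\lambda$ case moreover requires identifying ``bad'' configurations where the dB drift vanishes and collapsing them into the next step. You instead use the single potential $\phi=\sum_{u}\deg_u$ uniformly in $\lambda$, settle for \emph{nonnegative} drift (which already yields the fixation bound via \Cref{theorem: fixation prob}), and recover the absorption-time bound by passing to $\phi^2$ and lower-bounding the one-step second moment. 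This is more unified and avoids both the $\lambda$ case-split and the bad-state argument entirely. The price is slightly looser constants: your fixation bound $1/(n\alpha)$ sits between the paper's $1/n$ (large-$\lambda$ case) and $1/n^2$ (small-$\lambda$ case), and your absorption constant is $\alpha^3$ rather than an $r/(r-1)$-type constant; since $r\ge\alpha^2$ forces $\alpha\le\sqrt r$, both land squarely in $\Omega_r(n^{-2})$ and $O_r(n^4)$ as required.
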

\begin{proof}
We perform a case analysis based on whether $\lambda\ge \frac{1}{2}$.

\paragraph{Large $\lambda$ regime ($\lambda \ge 1/2$).}
Use the potential $\phi(S)=|S|$. For $u\in S$, $v\notin S$,
\begin{align*}
\psi_{u,v}^{Bd}(S)
&=\frac{1}{w(S)}\cdot\frac{r\deg_v-\deg_u}{\deg_u\deg_v},\\
\psi_{u,v}^{dB}(S)
&
=\frac{1}{n}\left(\frac{r}{w_v(S)}-\frac{1}{w_u(S)}\right)\ge \frac{\deg_u-\deg_v}{n\,\deg_u\deg_v}.
\end{align*}
Hence, for all $S\notin\{\varnothing,V\}$,
\begin{align*}
\psi_{u,v}^{\lambda}(S)
&=\lambda\,\psi_{u,v}^{Bd}(S)+(1-\lambda)\,\psi_{u,v}^{dB}(S) \\
&=(2\lambda-1)\psi_{u,v}^{Bd}(S)+(1-\lambda)(\psi_{u,v}^{Bd}(S)+\psi_{u,v}^{dB}(S)) \\
&= (2\lambda-1)\psi_{u,v}^{Bd}(S)+(2-2\lambda)\psi^{1/2}_{u, v}(S).
\end{align*}
Recall from \Cref{section: lambda=1/2} that $\psi^{1/2}_{u, v}(S) \ge (r-1)/(rn^3)$. We now bound $\psi^{Bd}_{u,v}$.

\begin{claim}
$\psi^{Bd}_{u,v}(S)\ge \frac{r-1}{r n^3}$.
\end{claim}
\begin{proof}
We may assume the graph is not regular, since otherwise the result is already implied by \Cref{thm: regular}. Thus, $\alpha\ge (n-1)/(n-2)$. For $n\ge 3$, since $r \ge \alpha^2$ and $\frac{n-1}{n-2} \ge \frac{n}{n-1}$,
\begin{align*}
n(r-\alpha)-(r-1)&\ge (n-1)\alpha^2-n\alpha+1\\&\ge (n-1)\left(\frac{n-1}{n-2}\right)^{2}-n\left(\frac{n-1}{n-2}\right)+1=\frac{n^2-3n+3}{(n-2)^{2}}\ge 0.
\end{align*}
Thus
\[
\psi_{u,v}^{Bd}(S)\ge \frac{1}{r n}\cdot\frac{r\deg_v-\deg_u}{\deg_u\deg_v}
= \frac{1}{r n}\cdot\frac{r-\deg_u/\deg_v}{\deg_u}\ge \frac{r-\alpha}{r n^2}\ge \frac{r-1}{rn^3}. \qedhere
\]
\end{proof}

Therefore,
\[
\psi_{u,v}^{\lambda}(S)\ge \left(2\lambda-1\right)\frac{r-1}{r n^3}+(2-2\lambda)\frac{r-1}{r n^3}=\frac{r-1}{r n^3}.
\]
Applying \Cref{theorem: absorption time advantage} with $k_1=n$ and $k_2=(r-1)/(r n^3)$ yields $\mathbb{E}[\tau]$ is $O_r(n^4)$.

For the fixation probability, applying \Cref{theorem: fixation prob} with $k_0 = 1$ and $k_1=n$ gives ${\fp(S_0)\ge \frac 1 n}$.

\paragraph{Small $\lambda$ regime ($\lambda \le 1/2$).}
Use the dB potential $\phi(u)=\deg_u$. Since $r\ge \alpha^2$, the Bd term can be bounded trivially
\[
\psi_{u,v}^{Bd}(S)\ge \frac{1}{w(S)}\left(\frac{r \deg_v}{\deg_u}-\frac{\deg_u}{\deg_v}\right)\ge 0.
\]
For the dB term:
\begin{itemize}
\item If $u$ (mutant) has no mutant neighbor and $v$ (resident) has no resident neighbor, then
\begin{equation}
\label{eq: bad state}
\psi_{u,v}^{dB}(S)=\frac{1}{n}\left(\frac{r \deg_v}{w_v(S)}-\frac{\deg_u}{w_u(S)}\right)
=\frac{1}{n}\left(\frac{r \deg_v}{r \deg_v}-\frac{\deg_u}{\deg_u}\right)=0.
\end{equation}

\item If $u$ has at least one mutant neighbor, then
\[
\psi_{u,v}^{dB}(S)=\frac{1}{n}\left(\frac{r \deg_v}{w_v(S)}-\frac{\deg_u}{w_u(S)}\right)
\ge \frac{1}{n}\left(\frac{r \deg_v}{r \deg_v-r+1}-\frac{\deg_u}{\deg_u}\right)\ge \frac{r-1}{r n^2}.
\]
Similarly, if $v$ has at least one resident neighbor, then
\[
\psi_{u,v}^{dB}(S)=\frac{1}{n}\left(\frac{r \deg_v}{w_v(S)}-\frac{\deg_u}{w_u(S)}\right)
\ge \frac{1}{n}\left(\frac{r \deg_v}{r \deg_v}-\frac{\deg_u}{\deg_u+r-1}\right)\ge \frac{r-1}{r n^2}.
\]
\end{itemize}

We call a configuration \emph{bad} if every mutant has only resident neighbors and every resident has only mutant neighbors, i.e., for all $u \in S$, $N(u)\subseteq V\setminus S$ and for all $v \notin S$, $N(v) \subseteq S$. In a bad state, the mutant has no selection advantage (in \Cref{eq: bad state}, we see the $r$ in the numerator and denominator cancel out). Accordingly, we have no strictly positive lower bounds on the drift. {\footnote{It may seem counterintuitive that a configuration is bad when all neighbors are of different types: in this case, the network is guaranteed to evolve instead of self-looping. However, while the state changes, we cannot ensure that it (in expectation) moves in the desired direction (i.e., increasing the potential function). Conversely, when some neighbors are of the same type, the system is more likely to remain in the same configuration, but whenever it changes, it tends to move in a favorable direction.}
}

However, we can guarantee that a bad state transits into a non-bad state in one move. Also, in a bad state $S$, $\psi_{u,v}^{\lambda}(S)\ge 0$ for all $(u,v)$ and hence
\[
\mathbb{E}[\phi(S_{i+1})-\phi(S_i)\mid S_i]\ge 0.
\]

In a non-bad state, $\psi_{u,v}^{\lambda}(S)\ge (1-\lambda)\frac{r-1}{r n^2}\ge \frac{r-1}{2 r n^2}$ for at least one edge $(u,v)$ (and they are non-negative for all $(u,v)$), so
\[
\mathbb{E}[\phi(S_{i+1})-\phi(S_i)\mid S_i]\ge \frac{r-1}{2 r n^2}.
\]

Consider a modified process with configurations $S'$ that evolves like the $\lambda$-Moran process except that, if after one step $S'$ is a non-absorbing bad configuration, it immediately evolves one more step. This modification preserves fixation probability and (up to constants) absorption time, and satisfies
\[
\mathbb{E}[\phi(S'_{i+1})-\phi(S'_i)\mid S'_i]\ge \frac{r-1}{2 r n^2}.
\]

Applying \Cref{theorem: absorption time advantage} with $k_1=n^2$, and $k_2=(r-1)/(2 r n^2)$ yields
$\mathbb{E}[\tau]$ is $O_r(n^4)$.
For the fixation probability, applying \Cref{theorem: fixation prob} with $k_0 \ge 1$ and $k_1=n^2$ gives ${\fp(S_0)\ge n^{-2}}$.
\end{proof}

Finally, we formally establish that random graphs have concentrated degrees and are hence almost regular with high probability.

\begin{theorem}[Folklore]
\label{thm:prob-of-almost-regular-graphs}
Let $G\sim G(n,p)$. Then for any constant $\alpha$,
\[
\mathbb{P}_{G\sim G(n,p)}\left(G\text{ is }\alpha\text{-almost regular}\right)\ge 1-2n\exp(-\Theta_\alpha(np)).
\]
\end{theorem}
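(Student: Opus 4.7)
The plan is to apply a standard Chernoff concentration bound to each individual vertex degree and then union-bound over all $n$ vertices. For $G \sim G(n,p)$, each $\deg_u$ is a sum of $n-1$ independent Bernoulli$(p)$ random variables, so $\deg_u \sim \text{Binomial}(n-1,p)$ with mean $\mu := (n-1)p$. This is exactly the setting in which multiplicative Chernoff bounds give
\[
\Pr\!\left[\,|\deg_u - \mu| > \delta \mu\,\right] \le 2 \exp\!\left(-\tfrac{\delta^2 \mu}{3}\right)
\]
for any $\delta \in (0,1)$.

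The first step is to choose $\delta = \delta(\alpha)$ so that the ratio between the largest and smallest degree is guaranteed to be at most $\alpha$ whenever every degree lies in $(1 \pm \delta)\mu$. A direct computation shows that the ratio of max to min degree is then bounded by $(1+\delta)/(1-\delta)$, so it suffices to pick any constant $\delta \le (\alpha - 1)/(\alpha + 1)$. For this choice, the Chernoff exponent $\delta^2 \mu / 3$ is $\Theta_\alpha(np)$, which is exactly the exponent required by the theorem statement.

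The second step is a union bound. Let $B_u$ denote the event $|\deg_u - \mu| > \delta \mu$. Then
\[
\Pr\!\left[\,\exists u : B_u\,\right] \le \sum_{u \in V} \Pr[B_u] \le 2n \exp\!\left(-\Theta_\alpha(np)\right).
\]
On the complement of this event, every vertex has degree in $[(1-\delta)\mu, (1+\delta)\mu]$, hence the ratio of max to min degree is at most $(1+\delta)/(1-\delta) \le \alpha$, i.e., $G$ is $\alpha$-almost regular. This yields the claimed bound.

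I do not anticipate any genuine obstacle: the result is folklore precisely because the two ingredients (multiplicative Chernoff for binomials and a union bound over $n$ vertices) combine in a one-line fashion once $\delta$ is chosen in terms of $\alpha$. The only minor care is that the degrees $\deg_u$ in $G(n,p)$ are not mutually independent (they share edges), but this does not matter for our argument since the union bound over the marginal events $B_u$ does not require joint independence, and each marginal distribution is still exactly Binomial$(n-1,p)$.
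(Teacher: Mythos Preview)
Your proposal is correct and follows essentially the same approach as the paper: choose $\delta=(\alpha-1)/(\alpha+1)$ so that $(1+\delta)/(1-\delta)=\alpha$, apply a multiplicative Chernoff bound to each $\deg_u\sim\mathrm{Bin}(n-1,p)$, and union-bound over the $n$ vertices. Your remark that the union bound does not require joint independence of the degrees is a helpful clarification not made explicit in the paper.
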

\begin{proof}
Let the constant $c=\frac{\alpha-1}{\alpha+1}$, so $\alpha=\frac{1+c}{1-c}$. Consider $G\sim G(n,p)$. Let $\mathcal E$ be the event that for all vertices $u$ in $G$,
\[
\deg_u\in[(1-c)(n-1)p,\,(1+c)(n-1)p].
\]
Then
\begin{align*}
\mathbb{P}(\neg\mathcal E)
&\le \mathbb{P}\left(d_{\min}<(1-c)(n-1)p\right)+\mathbb{P}\left(d_{\max}>(1+c)(n-1)p\right) \\
&\le \mathbb{P}\left(\exists u:\deg_u<(1-c)(n-1)p\right)+\mathbb{P}\left(\exists u:\deg_u>(1+c)(n-1)p\right) \\
&\le n\cdot\mathbb{P}\left(\deg_u<(1-c)(n-1)p\right)+n\cdot\mathbb{P}\left(\deg_u>(1+c)(n-1)p\right) \\
&\le 2n\exp(-\Theta_c(np)),
\end{align*}
where the last step follows from Chernoff's bound \cite{Chernoff1952}.
\end{proof}

As a corollary, taking $p=1/2$ shows that almost all graphs are $\alpha$-almost regular.

\begin{corollary}
\label{thm:density-of-almost-regular-graphs}
Let $G_n$ be the set of $n$-vertex graphs and $G_n^\alpha\subseteq G_n$ the $\alpha$-almost regular ones. For any absolute constant $\alpha$,
\[
\lim_{n\to\infty}\frac{|G_n^\alpha|}{|G_n|}=1.
\]
\end{corollary}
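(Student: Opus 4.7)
The plan is to recognize that this corollary is essentially bookkeeping on top of Theorem \ref{thm:prob-of-almost-regular-graphs}. The key observation is that $G(n,1/2)$ induces the uniform distribution on the set $G_n$ of labeled $n$-vertex graphs: each potential edge is included independently with probability $1/2$, so every labeled graph $H \in G_n$ appears with probability $2^{-\binom{n}{2}}$, which does not depend on $H$. Consequently,
\[
\frac{|G_n^\alpha|}{|G_n|} \;=\; \mathbb{P}_{G \sim G(n,1/2)}(G \in G_n^\alpha),
\]
which recasts the desired limiting ratio as a probability to which the preceding concentration result applies directly.

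Next, I would apply Theorem \ref{thm:prob-of-almost-regular-graphs} with $p = 1/2$ to obtain
\[
\frac{|G_n^\alpha|}{|G_n|} \;\ge\; 1 - 2n \exp\bigl(-\Theta_\alpha(n/2)\bigr).
\]
For any fixed $\alpha > 1$, the constant $c = (\alpha-1)/(\alpha+1)$ used in the proof of Theorem \ref{thm:prob-of-almost-regular-graphs} is strictly positive, so the exponent is a strictly negative constant (depending on $\alpha$) times $n$. Hence $2n\exp(-\Theta_\alpha(n/2)) \to 0$ as $n \to \infty$, and combining with the trivial upper bound $|G_n^\alpha|/|G_n| \le 1$ yields $\lim_{n\to\infty} |G_n^\alpha|/|G_n| = 1$.

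There is no genuine obstacle here; the one subtlety worth pinning down is the convention that $G_n$ consists of \emph{labeled} graphs on a fixed vertex set, since that is the precise setting in which $G(n,1/2)$ is the uniform measure used in the first step. The cases $\alpha = 1$ (exact regularity) and $\alpha < 1$ are excluded on their face, as the notion is either strict or vacuous there; the statement is intended for constants $\alpha > 1$, for which the argument above goes through without modification.
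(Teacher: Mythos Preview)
Your proposal is correct and matches the paper's approach exactly: the paper simply remarks that taking $p=1/2$ in \Cref{thm:prob-of-almost-regular-graphs} gives the corollary, and you have spelled out precisely why (uniformity of $G(n,1/2)$ on labeled graphs, then sending $n\to\infty$). Your added remarks about the labeled-graph convention and the constraint $\alpha>1$ are appropriate clarifications that the paper leaves implicit.
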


Combining \Cref{thm: fpras construction}, \ref{thm: bounds alpha regular}, and \ref{thm:prob-of-almost-regular-graphs} yields the FPRAS for random graphs.
\begin{restatable}[Random graphs]{theorem}{RandomGraphsMain}
\label{thm:random-graphs-main}
For every fixed $r>1$, consider a random graph $G \sim G(n, p)$. Then,  with high probability ($1-\exp(\Theta_r(np/\!\log n))$), $G$ has expected absorption time $O_r(n^4)$ and fixation probability $\Omega_r(n^{-2})$, and hence admits an FPRAS for \moranfix{}, regardless of $\lambda\in[0,1]$.
\end{restatable}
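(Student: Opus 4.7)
The plan is to chain together the three main technical results of this section, specializing them via a carefully chosen almost-regularity parameter. Concretely, given fixed $r>1$, I would set $\alpha := \sqrt{r}$, which is a constant depending only on $r$ and satisfies $\alpha\ge 1$. This choice is exactly the quantitative bridge: it is large enough that $G\sim G(n,p)$ concentrates around $\alpha$-almost regularity for $p$ not too small, and it is small enough that $r \ge \alpha^{2}$ (with equality), so the hypothesis of \Cref{thm: bounds alpha regular} is satisfied.

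First, I would invoke \Cref{thm:prob-of-almost-regular-graphs} with this $\alpha$ to conclude that $G \sim G(n,p)$ is $\alpha$-almost regular with probability at least $1 - 2n \exp(-\Theta_r(np))$. A short algebraic step, using $\log(2n)\ll np$ once $np$ is sufficiently large compared to $\log n$, rewrites this bound in the equivalent form $1 - \exp(-\Theta_r(np/\log n))$ stated in the theorem.

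Next, I would condition on this almost-regularity event. Since $r = \alpha^{2}\ge \alpha^{2}$, \Cref{thm: bounds alpha regular} applies directly and yields, uniformly over $\lambda \in [0,1]$ and over any non-empty initial mutant set $S_0$, an expected absorption time $O_r(n^4)$ together with a fixation probability lower bound $\Omega_r(n^{-2})$. These are precisely the two polynomial hypotheses needed by \Cref{thm: fpras construction}, which then produces the FPRAS for \moranfix{} on $G$ with $c_1 = 2$ and $c_2 = 4$.

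Overall this is a straightforward composition of three prior theorems, so there is no deep analytic obstacle; the only substantive choice is picking $\alpha = \sqrt{r}$ so that both the Chernoff-type concentration estimate for $G(n,p)$ and the almost-regular drift analysis activate simultaneously, and the only bookkeeping is translating the $1-2n\exp(-\Theta_r(np))$ tail bound into the theorem's $1-\exp(-\Theta_r(np/\log n))$ form by absorbing constants into asymptotic notation.
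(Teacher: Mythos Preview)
Your proposal is correct and matches the paper's own argument, which simply states that the theorem follows by combining \Cref{thm: fpras construction}, \Cref{thm: bounds alpha regular}, and \Cref{thm:prob-of-almost-regular-graphs}. Your choice $\alpha=\sqrt{r}$ is exactly the natural bridge, and your handling of the tail-probability bookkeeping is fine (and in fact more explicit than the paper's one-line justification).
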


\section{Bidegreed Graphs}
\label{sec: two degree}

In this section and the following section, we consider some interesting special cases of graphs. Here we consider graphs where the degrees all take on one of two possible values, also called bidegreed graphs (e.g., \cite{myrvold1987bidegreed,belardo2009bidegreed}). This class of graphs is interesting for two reasons.
First, in some sense, it is another way of examining graphs that are nearly regular, albeit in a different way than our $\alpha$-almost regular graphs.  Additionally, several natural graph types, including stars and bipartite biregular graphs, are bidegreed;  see \Cref{fig: graphs-2-deg}. 
\begin{definition}[Bidegreed graphs]
    A graph $G$ is $(d_1, d_2)$-bidegreed (or just bidegreed) for $d_1 \le d_2$ if $\deg_u \in \{d_1,d_2\}$ for all $u$. 
\end{definition}



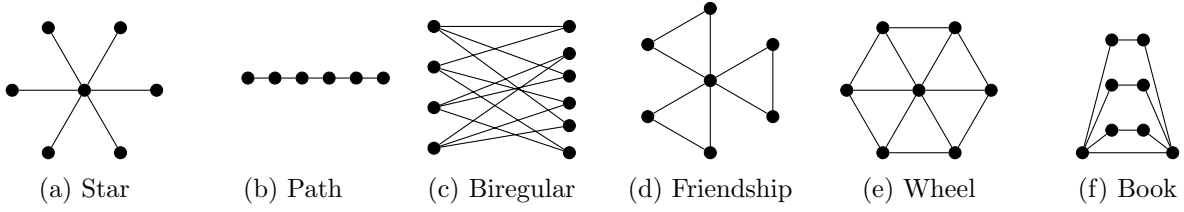
\begin{figure}[ht]
\centering

\begin{subfigure}[t]{0.16\textwidth}
\centering
\begin{tikzpicture}[scale=0.6]
  \node[vertex] (c) at (0,0) {};
  \foreach \i in {1,...,6}{
    \node[vertex] (v\i) at ({1.6*cos(60*\i)},{1.6*sin(60*\i)}) {};
    \draw[edge] (c)--(v\i);
  }
\end{tikzpicture}
\caption{Star}
\end{subfigure}\hfill%
\begin{subfigure}[t]{0.16\textwidth}
\centering
\begin{tikzpicture}[scale=0.6]
\useasboundingbox (-0.4,-0.8) rectangle (2.4,1.6);
  \foreach \i in {0,...,5}{
    \node[vertex] (p\i) at (\i*0.6,1) {};
  }
  \foreach \i in {0,...,4}{
    \draw[edge] (p\i)--(p\the\numexpr\i+1\relax);
  }
\end{tikzpicture}
\caption{Path}
\end{subfigure}\hfill%
\begin{subfigure}[t]{0.16\textwidth}
\centering
\begin{tikzpicture}[scale=0.6]
  \node[vertex] (L1) at (0,  1.4) {};
  \node[vertex] (L2) at (0,  0.5) {};
  \node[vertex] (L3) at (0, -0.4) {};
  \node[vertex] (L4) at (0, -1.3) {};
  \node[vertex] (R1) at (3,  1.4) {};
  \node[vertex] (R2) at (3,  0.8) {};
  \node[vertex] (R3) at (3,  0.3) {};
  \node[vertex] (R4) at (3, -0.3) {};
  \node[vertex] (R5) at (3, -0.8) {};
  \node[vertex] (R6) at (3, -1.4) {};
  \draw[edge] (L1)--(R1) (L1)--(R3) (L1)--(R5)
              (L2)--(R1) (L2)--(R4) (L2)--(R6)
              (L3)--(R2) (L3)--(R3) (L3)--(R6)
              (L4)--(R2) (L4)--(R4) (L4)--(R5);
\end{tikzpicture}
\caption{Biregular}
\end{subfigure}\hfill%
\begin{subfigure}[t]{0.16\textwidth}
\centering
\begin{tikzpicture}[scale=0.6]
  \node[vertex] (c) at (0,0) {};
  \foreach \k in {1,...,6}{
    \node[vertex] (w\k) at ({1.6*cos(60*\k+30)},{1.6*sin(60*\k+30)}) {};
    \draw[edge] (c)--(w\k);
  }
  \draw[edge] (w1)--(w2);
  \draw[edge] (w3)--(w4);
  \draw[edge] (w5)--(w6);
\end{tikzpicture}
\caption{Friendship}
\end{subfigure}\hfill%
\begin{subfigure}[t]{0.16\textwidth}
\centering
\begin{tikzpicture}[scale=0.6]
  \node[vertex] (c) at (0,0) {};
  \foreach \i in {1,...,6}{
    \node[vertex] (r\i) at ({1.6*cos(60*\i)},{1.6*sin(60*\i)}) {};
  }
  \foreach \i/\j in {1/2,2/3,3/4,4/5,5/6,6/1}{
    \draw[edge] (r\i)--(r\j);
  }
  \foreach \i in {1,...,6}{
    \draw[edge] (c)--(r\i);
  }
\end{tikzpicture}
\caption{Wheel}
\end{subfigure}\hfill%
\begin{subfigure}[t]{0.16\textwidth}
\centering
\begin{tikzpicture}[scale=0.6]
  \node[vertex] (u) at (-1.0,-1) {};
  \node[vertex] (v) at ( 1.0,-1) {};
  \draw[edge] (u)--(v);
  \def\sep{1}
  \def\xoff{0.35}
  \foreach \k in {-1,0,1}{
    \node[vertex] (a\k) at (-\xoff,{\k*\sep+0.5}) {};
    \node[vertex] (b\k) at ( \xoff,{\k*\sep+0.5}) {};
    \draw[edge] (u)--(a\k)--(b\k)--(v);
  }
\end{tikzpicture}
\caption{Book}
\end{subfigure}

\caption{Examples of Bidegreed Graphs.}
\label{fig: graphs-2-deg}
\end{figure}

We note that a $(d_1, d_2)$-bidegreed graph is also $(d_2/d_1)$-almost regular. Thus, the results from \Cref{section: almost-regular} apply. However, with the extra guarantee that all vertices have degrees that take on only two possible values, we achieve stronger results. We first show a closed-form expression for the fixation probability under $r=1$.

\begin{theorem}
\label{thm: two degree fp}
    Consider a $(d_1, d_2)$-bidegreed graph $G$. Then \[\fp^{\lambda, 1}_G(S_0) = \frac{\sum_{v \in S_0}f(\deg_v)}{\sum_{v\in V} f(\deg_v)},\]
    where \[f(x)= 
    \begin{cases}
        1 & x = d_1,\\
        \dfrac{\lambda d_1 + (1-\lambda)d_2}{\lambda d_2 + (1-\lambda) d_1} & x = d_2.
    \end{cases}
    \]
\end{theorem}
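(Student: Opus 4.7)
The plan is to apply the potential-function martingale argument of \Cref{theorem: fixation prob} with the potential $\phi(S)=\sum_{v\in S}f(\deg_v)$, where $f$ is the function defined in the statement. Since $\phi(\varnothing)=0$ and $\phi(V)=\sum_{v\in V}f(\deg_v)$ is a positive constant, and since $0\le\phi(S)\le\phi(V)$ for every $S\subseteq V$, it suffices to check that $\phi$ is a martingale along the $\lambda$-mixed Moran process at $r=1$. If this holds, then \Cref{theorem: fixation prob} applied with equality gives $\fp^{\lambda,1}_G(S_0)=\phi(S_0)/\phi(V)$, which is exactly the claimed formula. (One can alternatively first invoke \Cref{thm: additive} to reduce to the single-mutant case; either route works.)

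The heart of the proof is verifying the martingale identity, which I will do edge-by-edge using the decomposition $\E[\phi(S_{i+1})-\phi(S_i)\mid S_i]=\sum_{(u,v)\in\bdry(S_i)}\psi^{\lambda}_{u,v}(S_i)$. At $r=1$ we have $w(S)=n$ and $w_u(S)=\deg_u$, so the edge-wise drifts simplify to
\begin{align*}
\psi^{Bd}_{u,v}(S)&=\tfrac{1}{n}\bigl(\tfrac{f(\deg_v)}{\deg_u}-\tfrac{f(\deg_u)}{\deg_v}\bigr),\\
\psi^{dB}_{u,v}(S)&=\tfrac{1}{n}\bigl(\tfrac{f(\deg_v)}{\deg_v}-\tfrac{f(\deg_u)}{\deg_u}\bigr).
\end{align*}
Notably, both expressions depend on $S$ only through the degrees of the two endpoints, not through the rest of the configuration—this is the key structural feature that makes the bidegreed setting tractable.

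I will handle the edges by splitting into cases based on the pair $(\deg_u,\deg_v)$. When $\deg_u=\deg_v$, both drifts vanish termwise. The only nontrivial case is a boundary edge joining a degree-$d_1$ vertex to a degree-$d_2$ vertex. Using $f(d_1)=1$ and solving $\lambda\psi^{Bd}_{u,v}(S)+(1-\lambda)\psi^{dB}_{u,v}(S)=0$ for $f(d_2)$ yields
\begin{equation*}
f(d_2)\cdot\frac{\lambda d_2+(1-\lambda)d_1}{d_1 d_2}=\frac{\lambda d_1+(1-\lambda)d_2}{d_1 d_2},
\end{equation*}
which rearranges to precisely the formula in the statement. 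Swapping the roles of $u$ and $v$ in this case simply negates both drifts, so the identity still gives zero. Summing $\psi^{\lambda}_{u,v}(S)=0$ over all boundary edges establishes the martingale property, and \Cref{theorem: fixation prob} then closes the argument.

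The main obstacle, such as it is, is the algebraic step above; the choice of $f(d_2)$ is essentially forced by the requirement that mixed-degree boundary edges contribute zero drift. Beyond that, the argument is a direct application of the tools from \Cref{section: prelim}. It is worth noting that this approach relies crucially on the graph being bidegreed: if there were three or more distinct degrees, then different kinds of mixed-degree edges would impose different constraints on $f$, which would generally be incompatible, and a per-edge martingale potential of this form would not exist.
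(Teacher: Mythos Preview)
Your proposal is correct and follows essentially the same route as the paper's own proof: both use the potential $\phi(S)=\sum_{v\in S}f(\deg_v)$, verify the per-edge drift $\psi^{\lambda}_{u,v}(S)$ vanishes by the same case analysis on $(\deg_u,\deg_v)$, and then invoke \Cref{theorem: fixation prob} (the paper reduces to singletons via \Cref{thm: additive} first, while you apply the martingale argument directly to general $S_0$, but this is an immaterial difference).
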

\begin{proof}
    We show the result for singleton sets $S = \{u\}$; the general theorem follows from \Cref{thm: additive}. We use the potential function $\phi(u) = f(\deg_u)$.
    Now fix $S$ and consider $(u, v) \in \bdry(S)$. We have 
    \[
\psi^{\lambda,1}_{u, v}(S)
=\frac{\lambda}{n}\left(\frac{f(\deg_v)}{\deg_u}-\frac{f(\deg_u)}{\deg_v}\right)
+\frac{1-\lambda}{n}\left(\frac{f(\deg_v)}{\deg_v}-\frac{f(\deg_u)}{\deg_u}\right),
\]
which easily evaluates to $0$ if $\deg_u = \deg_v$. If $(\deg_u, \deg_v) = (d_1, d_2)$, then 
\begin{align*}
    \psi^{\lambda,1}_{u, v}(S)
&=\frac{1}{n}\Bigg[\lambda\left(\frac{f(d_2)}{d_1}-\frac{1}{d_2}\right)
+(1-\lambda)\left(\frac{f(d_2)}{d_2}-\frac{1}{d_1}\right)\Bigg]
\\&= \frac{1}{nd_1d_2} (\lambda(f(d_2)d_2-d_1)+ (1-\lambda)(f(d_2)d_1-d_2))\\
&= \frac{1}{nd_1d_2}(f(d_2)(\lambda d_2+(1-\lambda)d_1)-(\lambda d_1+(1-\lambda)d_2))=0
\end{align*}
Similarly it evaluates to $0$ if $(\deg_u, \deg_v) = (d_2, d_1)$. Thus
\[\E[\phi(S_{i+1})-\phi(S_i) \mid S_i] = 0\]
and $\phi(S_i)$ is a martingale. Applying \Cref{theorem: fixation prob} with $k_0 = f(\deg_u)$ and $k_1 = \sum_v f(\deg_v)$ gives the theorem.
\end{proof}

Next, we establish a bound for the absorption time. We could apply \Cref{thm: bounds alpha regular} when $r \ge \alpha^2 = (d_2/d_1)^2$. However, for bidegreed graphs, we can remove this assumption and bound the absorption time for any $r$. We remark, however, that the exponent in our bound is probably not tight; we leave proving better bounds for future work.

\begin{theorem}
    Consider a $(d_1, d_2)$-bidegreed graph $G$, and let $\alpha = d_2/d_1$. Then for any initial set $S_0$, the absorption time is at most $O_r(n^4\alpha^2)$ if $r \ne 1$ and $O(n^4\alpha^4)$ if $r = 1$.
\end{theorem}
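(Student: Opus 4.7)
The plan is to use a single potential function $\phi(S) = \sum_{u \in S} f(\deg_u)$ for both cases, where $f$ is the vertex potential from \Cref{thm: two degree fp}. By that theorem, $\phi$ is a martingale at $r = 1$ for every $\lambda$, so $\psi^\lambda_{u,v}(S)|_{r=1} = 0$ on every boundary edge. Since $f(d_1) = 1$ and $f(d_2) \in [1/\alpha, \alpha]$, the maximum $M := \phi(V)$ is at most $n\alpha$. The key point is that for bidegreed graphs this $f$ exactly balances the Bd and dB contributions at $r = 1$ for any $\lambda$; this is what allows the following argument to succeed without imposing $r \ge \alpha^2$ as in \Cref{thm: bounds alpha regular}.

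\textbf{Case $r \neq 1$.} By symmetry (swapping mutants and residents gives a process with fitness $r' = 1/r$ and the same absorption time), it suffices to treat $r > 1$. Using the identities $w(S) = n + (r-1)|S|$ and $w_u(S) = \deg_u + (r-1)|N(u) \cap S|$, routine algebra rewrites
$$\psi^\lambda_{u,v}(S) = \lambda\,\frac{r-1}{n\,w(S)}\!\left(\tfrac{(n-|S|)\,f(\deg_v)}{\deg_u} + \tfrac{|S|\,f(\deg_u)}{\deg_v}\right) + (1-\lambda)\,\frac{r-1}{n}\!\left(\tfrac{|N(v)\setminus S|\,f(\deg_v)}{w_v(S)\,\deg_v} + \tfrac{|N(u)\cap S|\,f(\deg_u)}{w_u(S)\,\deg_u}\right),$$
every term of which is nonnegative. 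For $\lambda \ge 1/2$, the Bd piece alone yields $\psi^\lambda_{u,v}(S) \ge (r-1)/(2 r n^3 \alpha)$ at every boundary edge, using $w(S) \le r n$, $\deg \le n$, $f \ge 1/\alpha$, and $n - |S| \ge 1$. For $\lambda < 1/2$ I would instead use the dB piece, which yields the same order of lower bound on any boundary edge $(u^*, v^*)$ satisfying $|N(u^*) \cap S| \ge 1$ or $|N(v^*) \setminus S| \ge 1$; such an edge exists in every non-bad configuration by tracing a path from a same-type adjacency to the opposite side, and bad states are handled via the modified-process trick from \Cref{section: almost-regular} (the bad-state drift is nonnegative and every move exits a bad state). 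Applying \Cref{theorem: absorption time advantage} with $k_1 = n\alpha$ and $k_2 = \Omega_r(n^{-3}\alpha^{-1})$ gives $\E[\tau] = O_r(n^4 \alpha^2)$.

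\textbf{Case $r = 1$.} Here $\phi$ is a martingale, so I would mirror the quadratic-potential step from \Cref{thm: abs-time-half}. Let $g(x) = x(M - x)$ and $Z_i = g(\phi(S_i)) + c\,i$. Any change $\Delta$ equals $\pm f(\deg_w)$ for some vertex $w$, so $|\Delta| \ge 1/\alpha$ whenever nonzero, and each boundary edge changes the state in one step with probability at least $2/(n\,d_2) \ge 2/n^2$; hence $\E[\Delta^2 \mid S_i] \ge 2/(n^2 \alpha^2)$ in any non-absorbing state. Setting $c = 2/(n^2\alpha^2)$ makes $(Z_i)$ a bounded-difference supermartingale. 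Combined with the block-based finiteness-of-$\E[\tau]$ argument from \Cref{thm: abs-time-half} and optional stopping, together with $g(\phi(S_\tau)) = 0$ and $g(\phi(S_0)) \le M^2/4 \le n^2 \alpha^2/4$, this yields $\E[\tau] \le (n^2 \alpha^2/2)\cdot (n^2 \alpha^2/4) = O(n^4 \alpha^4)$.

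The main obstacle I anticipate is the $\lambda < 1/2$ sub-case of $r > 1$: making rigorous the existence of a boundary edge with a strictly positive dB correction in every non-bad configuration. The witness is produced by walking from any same-type adjacency (which exists in non-bad states) along a path to the opposite class, but the argument must handle both the mutant-mutant-adjacency and resident-resident-adjacency cases symmetrically, and one must verify that the modified-process reduction goes through for the potential $\phi$ and not just the degree potential used in \Cref{section: almost-regular}.
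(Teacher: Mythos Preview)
Your approach is essentially the same as the paper's: both use the potential $\phi(S)=\sum_{u\in S} f(\deg_u)$ from \Cref{thm: two degree fp}, decompose each $\psi^\lambda_{u,v}$ into a neutral part (which cancels exactly) plus a nonnegative $(r-1)$-residual, and for $r=1$ run the quadratic-potential optional-stopping argument with $\E[\Delta^2\mid S_i]\ge \Omega(1/(n^2\alpha^2))$.

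One minor difference worth noting: the paper does \emph{not} split on $\lambda$. It bounds $\lambda D^{Bd}+(1-\lambda)D^{dB}\ge \frac{r-1}{rn^2}\min\bigl\{\tfrac{f(\deg_v)}{\deg_u},\tfrac{f(\deg_u)}{\deg_v},\tfrac{f(\deg_v)}{\deg_v},\tfrac{f(\deg_u)}{\deg_u}\bigr\}\ge \frac{r-1}{rn^3\alpha}$ directly on any boundary edge with $|N(u)\cap S|+|N(v)\setminus S|\ge 1$, and invokes the bad-state trick uniformly for all $\lambda$. Your $\lambda\ge 1/2$ versus $\lambda<1/2$ split works but is unnecessary. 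As for the obstacle you flag, the path argument you sketch is exactly right and routine: if some mutant $u_0$ has a mutant neighbor, a shortest path from $u_0$ to $V\setminus S$ has a last mutant $u^*$ whose predecessor on the path is also a mutant, so $|N(u^*)\cap S|\ge 1$ at the boundary edge out of $u^*$; the resident-resident case is symmetric. The modified-process reduction depends only on nonnegativity of the drift in bad states and on bad states transitioning out in one step, both of which hold for this $\phi$.
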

\begin{proof}
    We do a case analysis based on whether $r = 1$. We employ the potential $\phi(u) = f(\deg_u)$ with $f$ the same as \Cref{thm: two degree fp}.

\paragraph{Case $r \ne 1$.} We analyze the case when $r > 1$, and the $r < 1$ case again follows from swapping the role of mutants and residents. Recall that $w(S) = r|S| + (n-|S|)$ and $w_u(S) = r|N(u) \cap S| + |N(u)\setminus S|$.
We have 
\begin{align*}
    \psi^{\mathrm{Bd}}_{u, v}(S)
&=\frac{1}{w(S)}\!\left( r\,\frac{f(\deg_v)}{\deg_u}-\frac{f(\deg_u)}{\deg_v} \right)\\
&=\frac{1}{n}\!\left( \frac{f(\deg_v)}{\deg_u}-\frac{f(\deg_u)}{\deg_v} \right)
+\frac{r-1}{w(S)\,n}\!\left( (n-|S|)\frac{f(\deg_v)}{\deg_u}+|S|\frac{f(\deg_u)}{\deg_v} \right)\\
&\ge\
\underbrace{\frac{1}{n}\!\left( \frac{f(\deg_v)}{\deg_u}-\frac{f(\deg_u)}{\deg_v} \right)}_{N^{Bd}}
+\underbrace{\frac{r-1}{r\,n}\min\!\left\{\frac{f(\deg_v)}{\deg_u},\frac{f(\deg_u)}{\deg_v}\right\}}_{D^{Bd}},
\end{align*}
where we split the final expression into two terms: the first equals the potential change from the neutral $r=1$ case, and the second is an additional drift term.

For the dB case,
\begin{align*}
    \psi^{\mathrm{dB}}_{u, v}(S)
&=\frac{1}{n}\!\left( r\,\frac{f(\deg_v)}{w_v(S)}-\frac{f(\deg_u)}{w_u(S)} \right)
\\&=\frac{1}{n}\!\left( \frac{f(\deg_v)}{\deg_v}-\frac{f(\deg_u)}{\deg_u} \right)
+\frac{r-1}{n}\!\left(
\frac{f(\deg_v)|N(v)\setminus S|}{\deg_v\,w_v(S)}
+\frac{f(\deg_u)|N(u) \cap S|}{\deg_u\,w_u(S)}
\right)\\
&\ge
\underbrace{\frac{1}{n}\!\left( \frac{f(\deg_v)}{\deg_v}-\frac{f(\deg_u)}{\deg_u} \right)}_{N^{dB}}
+\underbrace{\frac{r-1}{rn^2}\!\left(\frac{f(\deg_v)|N(v) \setminus S|}{\deg_v} +\frac{f(\deg_u)\,|N(u) \cap S|}{\deg_u}\right)}_{D^{dB}}
\end{align*}

We now compute \[\psi^{\lambda}_{u, v}(S) = \lambda\psi^{Bd}_{u,v}(S) + (1-\lambda)\psi^{dB}_{u, v}(S) = \lambda N^{Bd} + (1-\lambda)N^{dB} + \lambda D^{Bd} + (1-\lambda)D^{dB}.\]

Recall from \Cref{thm: two degree fp} that the neutral terms cancel each other:
\[
\lambda N^{Bd} + (1-\lambda)N^{dB} = \frac{1}{n} \left[ \lambda \!\left( \frac{f(\deg_v)}{\deg_u}-\frac{f(\deg_u)}{\deg_v} \right) + (1-\lambda) \left(\frac{f(\deg_v)}{\deg_v}-\frac{f(\deg_u)}{\deg_u} \right)\right] = 0.
\]

It remains to provide a strictly positive lower bound for the drift terms.

Recall from the proof of \Cref{thm: bounds alpha regular} that we say a configuration $S$ is bad if $N(u) \subseteq V\setminus S$ for all $u \in S$ and $N(v) \subseteq S$ for all $v \notin S$. In that case, $|N(v) \setminus S| = |N(u) \cap S| = 0$ and we have no strictly positive lower bound for dB. Consequently, we only have $\psi^{\lambda}_{u, v}(S) \ge 0$, but we can use that $S$ will transit into a non-bad configuration in one step.

Suppose $S$ is not bad, then there exists $(u, v) \in \bdry(S)$ such that $|N(v) \setminus S| + |N(u)\cap S| \ge 1$, and so
\[
D^{dB} \ge \frac{r-1}{rn^2} \min \left\{\frac{f(\deg_v)}{\deg_v}, \frac{f(\deg_v)}{\deg_u}\right\}.
\]

Thus, 
noting that $f(\cdot) \in [\alpha^{-1}, \alpha]$,
\begin{align*}
    \psi^{\lambda}_{u, v}(S) &= \lambda N^{Bd} + (1-\lambda)N^{dB} + \lambda D^{Bd} + (1-\lambda)D^{dB}
    \\&=\lambda D^{Bd} + (1-\lambda)D^{dB}
    \\&\ge \lambda \frac{r-1}{r\,n}\min\!\left\{\frac{f(\deg_v)}{\deg_u},\frac{f(\deg_u)}{\deg_v}\right\} + (1-\lambda) \frac{r-1}{rn^2} \min \left\{\frac{f(\deg_v)}{\deg_v}, \frac{f(\deg_v)}{\deg_u}\right\}.
    \\& \ge \frac{r-1}{rn^2} \min \left\{\frac{f(\deg_v)}{\deg_v}, \frac{f(\deg_v)}{\deg_u}, \frac{f(\deg_u)}{\deg_v}, \frac{f(\deg_u)}{\deg_u}\right\}\\
    &\ge \frac{r-1}{rn^3\alpha}.
\end{align*}

Therefore,
\begin{itemize}
    \item If $S_i$ is bad, then $\E[\phi(S_{i+1} - \phi(S_i) \mid S_i] \ge 0$ and $S_{i+1}$ is not bad.
    \item Otherwise $\E[\phi(S_{i+1} - \phi(S_i) \mid S_i] \ge \frac{r-1}{rn^3\alpha}$.
\end{itemize}

On the other hand, 
\[
\sum_{v \in V} \phi(v) \le n \frac{d_2}{d_1} = n \alpha.
\]

Using the same idea as in \Cref{thm: bounds alpha regular} of collapsing a step from a bad configuration into the next step, and applying \Cref{theorem: absorption time advantage} with $k_1 = n\alpha$ and $k_2 = \frac{r-1}{rn^3 \alpha}$, the absorption time is bounded by $O_r(n^4\alpha^2)$. 

\paragraph{Case $r=1$.}
As shown in \Cref{thm: two degree fp}, $\E\!\left[\phi(S_{i+1})-\phi(S_i)\mid S_i\right]=0$.
Let
\[
\phi(S):=\sum_{u\in S} f(\deg_u),\qquad
M:=\sum_{u\in V} f(\deg_u),\qquad
\Delta_i:=\phi(S_{i+1})-\phi(S_i),
\]
and define $g(x)=x\,(M-x)$. We have $g(x) \le M^2/4$. Then
\begin{align*}
g\!\left(\phi(S_{i+1})\right)-g\!\left(\phi(S_i)\right)
&=\left(M-2\,\phi(S_i)\right)\Delta_i-\Delta_i^{\,2}.
\end{align*}
Using $\E[\Delta_i\mid S_i]=0$,
\begin{align*}
\E\!\left[g\!\left(\phi(S_{i+1})\right)-g\!\left(\phi(S_i)\right)\mid S_i\right]
&=-\,\E\!\left[\Delta_i^{\,2}\mid S_i\right]\le 0.
\end{align*}

In any nonabsorbing $S_i$, there exists a boundary edge $(u,v)$. The probability that they are selected (in any order) is at least $n^{-2}$, and in that case, $|\Delta_i| \ge \alpha^{-1}$. Thus
\[
\E\!\left[\Delta_i^{\,2}\mid S_i\right]\ \ge\ \frac{1}{n^2 \alpha^2}.
\]

Setting
\[
Z_i\ :=\ g\!\left(\phi(S_i)\right)\ + \frac i {n^2 \alpha^2},
\]
we have
\[
\E\!\left[Z_{i+1}-Z_i\mid S_i\right]\le 0,
\]
so $(Z_i)$ is a supermartingale with bounded one–step differences. Similar to the proof of \Cref{thm: abs-time-half}, it is easy to verify that $\tau$ has finite expectation. Thus, applying optional stopping at the absorption time $\tau$,
\begin{align*}
\E[g(\phi(S_\tau)] +\frac{\E[\tau]}{n^2 \alpha^2} = \E[Z_\tau] \le \E[Z_0].
\end{align*}

Since $\phi(S_\tau)\in\{0,M\}$, we have $g\!\left(\phi(S_\tau)\right)=0$ and 
\begin{align*}
   \E[\tau] \le n^2 \alpha^2 g\!\left(\phi(S_0)\right)
\le \frac{M^2n^2 \alpha^2}{4} = O(n^4 \alpha^4),
\end{align*}
since $M \le n\alpha$.
\end{proof}

As in previous sections, this implies FPRAS for \moranfix{} on bidegreed graphs when $r \ge 1$.

\section{Exact Formulations for Stars and Cycles}
\label{sec: special}
Stars and cycles, besides being basic graph types worth examining, are interesting special cases because, based on their symmetry, the state of the mutant configuration set $S$ has a simple representation. 
When a cycle graph starts with a single mutant, the mutants are always contiguous on the cycle, so the state can be represented by the number of mutants.  The star has similar symmetry, except for the center; the state can be represented by the number of mutants and whether the center is a mutant.  We use this to determine exact formulations for the fixation probability.

\subsection{Cycles}
Note that cycles are $2$-regular graphs, thus all results from \Cref{section: almost-regular} apply. On top of that, we can solve for the exact fixation probability for any $r$ and the graph starts with one mutant vertex. In this setting, we only need to control the current number of mutants, which are contiguous on the cycle. We compute the probability of the mutant size increasing $p_k^{\uparrow}$ or decreasing $p_k^{\downarrow}$ when there are $k$ mutants.

\begin{figure}[t]
    \centering
    \includegraphics[width=0.6\linewidth]{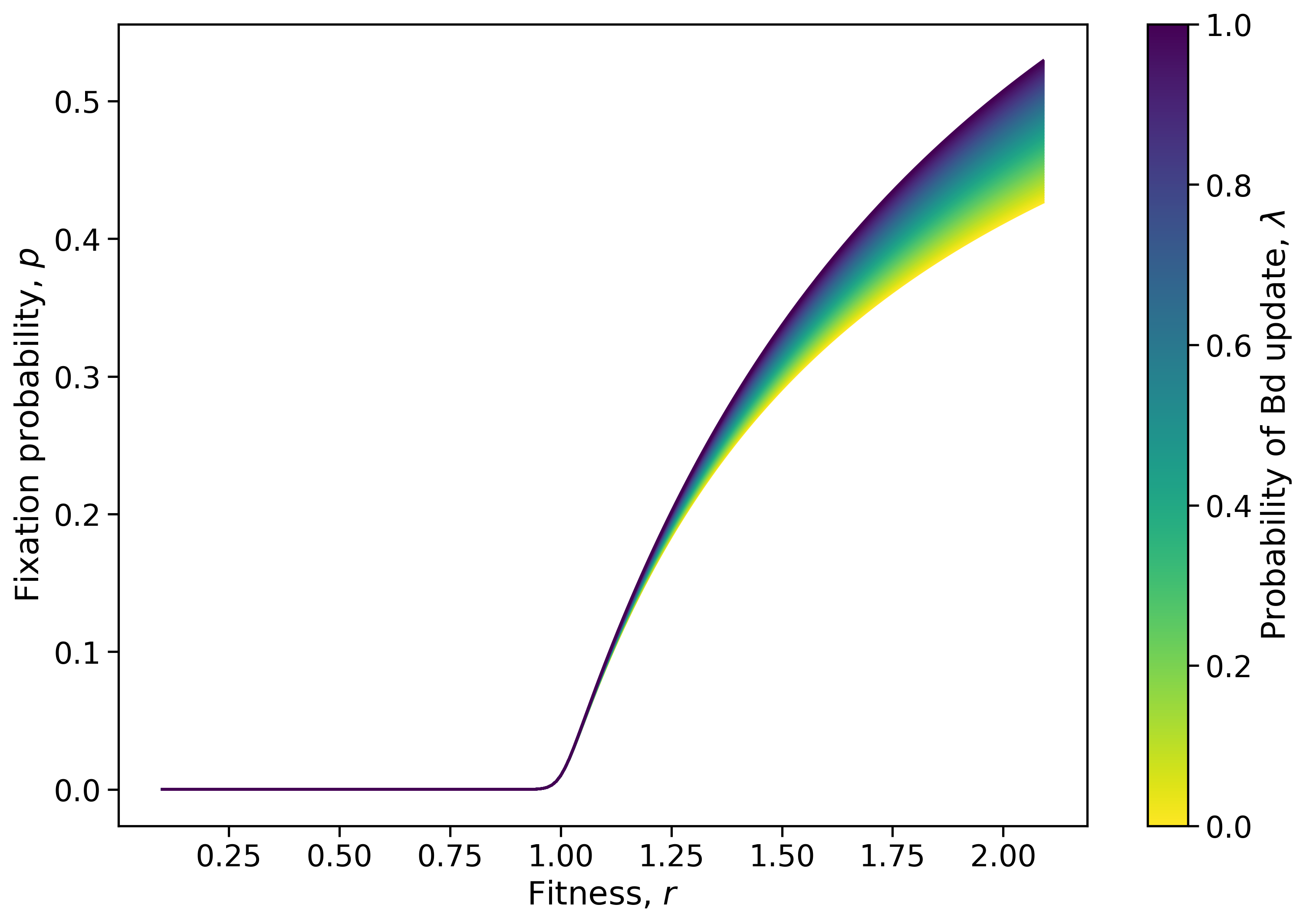}
    \caption{Fixation probabilities for a cycle on $n=100$ vertices starting from a single mutant.
    Fixation probabilities are computed using \Cref{eq: cycle value}. The two parameters are $r$ and $\lambda$, with $\lambda$ encoded by color. When $r<1$, the fixation probabilities are very small (but not actually 0).}
    \label{fig:cycle-N100}
\end{figure}

Let $F_k := rk + (n-k)$ be the total fitness of the population. We have 
\begin{itemize}
    \item For $p_k^{\uparrow}$, when $k < n-1$,
    \[
    p_k^{\uparrow} 
    = \lambda \cdot \frac{2r}{F_k} \cdot \frac{1}{2} 
      + (1-\lambda) \cdot \frac{2}{n} \cdot \frac{r}{1+r} 
    = \lambda \cdot \frac{r}{F_k} 
      + (1-\lambda) \cdot \frac{2r}{(1+r)n}.
    \]
    When $k=n-1$,
    \[
    p_k^{\uparrow} 
    = \lambda \cdot \frac{2r}{F_k} \cdot \frac{1}{2} 
      + (1-\lambda) \cdot \frac{1}{n} 
    = \lambda \cdot \frac{r}{F_k} 
      + (1-\lambda)\cdot \frac{1}{n}.
    \]
    \item For $p_k^{\downarrow}$, when $k > 1$,
    \[
    p_k^{\downarrow} 
    = \lambda \cdot \frac{2}{F_k} \cdot \frac{1}{2} 
      + (1-\lambda) \cdot \frac{2}{n} \cdot \frac{1}{1+r}
    = \lambda \cdot \frac{1}{F_k} 
      + (1-\lambda) \cdot \frac{2}{(1+r)n}.
    \]
    When $k=1$,
    \[
    p_k^{\downarrow} 
    = \lambda \cdot \frac{2}{F_k} \cdot \frac{1}{2} 
      + (1-\lambda) \cdot \frac{1}{n}
    = \lambda \cdot \frac{1}{F_k} 
      + (1-\lambda) \cdot \frac{1}{n}.
    \]
\end{itemize}
\noindent
From this, we can use formula (6.10) in \cite{nowak2006evolutionary} to get the fixation probability
\begin{equation}
\label{eq: cycle value}
    \fp^{\lambda, r}_G(u) = \frac{1}{1 + \sum_{j=1}^{n-1}\prod_{k=1}^j \gamma_k},
\end{equation}
where
\begin{align*}
    \gamma_k = p_k^{\downarrow}/p_k^{\uparrow}=
    \begin{cases}
        \dfrac{\lambda/F_k + (1-\lambda)/n}
              {\lambda r/F_k + (1-\lambda)\, 2r/((1+r)n)} 
        & \text{if } k=1, \\[1.2em]
        \dfrac{\lambda/F_k + (1-\lambda)\, 2/((1+r)n)}
              {\lambda r/F_k + (1-\lambda)/n} 
        & \text{if } k=n-1, \\[1.2em]
        \dfrac{\lambda/F_k + (1-\lambda)\, 2/((1+r)n)}
              {\lambda r/F_k + (1-\lambda)\, 2r/((1+r)n)} 
        & \text{otherwise}.
    \end{cases}
\end{align*}

\subsection{Stars}
\label{section: star}
\newcommand{\Ps}[1]{P_{#1}^{\star}}
\newcommand{\Pe}[1]{P_{#1}^{\varnothing}}
\newcommand{\Fstar}[1]{F_{#1}^{\star}}
\newcommand{\Femp}[1]{F_{#1}^{\varnothing}}
\newcommand{\Gi}{G_i}

We consider a star graph with a center vertex and $N$ leaves, so $n=N+1$ and $|E|=N$. A state in our Markov chain is described by $(i,\sigma)$, where $i\in\{0,\dots,N\}$ counts mutant leaves and $\sigma\in\{\star,\varnothing\}$ records whether the center is a mutant ($\star$) or a resident ($\varnothing$). We write
\begin{equation}
\Ps{i}:=\fp(i,\star),
\qquad
\Pe{i}:=\fp(i,\varnothing).
\end{equation}
Boundary conditions from mutant extinction/fixation are
\begin{equation}
\Pe{0}=0,
\qquad
\Ps{N}=1.
\end{equation}

Let $d_i:=N-i$ be the number of resident leaves. The fitness totals we use are
\[
\Fstar{i}=ri+d_i+r,
\qquad
\Femp{i}=ri+d_i+1,
\qquad
\Gi=ri+d_i,
\]
where $\Fstar{i}$ is the population fitness when the center is mutant, $\Femp{i}$ when the center is resident, and $\Gi$ is the leaf-layer fitness after the center dies in a dB step.

We follow a similar analysis as Section 5 in \cite{broom2008analysis}. When the center is a mutant, enumerating the one-step moves yields
\[
(1-C_i)\,\Ps{i}=A_i\,\Ps{i+1}+B_i\,\Pe{i},
\]
where
\begin{align*}
    A_i&=\lambda\,\frac{r\,d_i}{N\,\Fstar{i}}+(1-\lambda)\,\frac{d_i}{n},\\
B_i&=\lambda\,\frac{d_i}{\Fstar{i}}+(1-\lambda)\,\frac{d_i}{n\,\Gi},
\\
C_i&=\lambda\Bigl(\frac{ri}{\Fstar{i}}+\frac{ri}{N\,\Fstar{i}}\Bigr)
     +(1-\lambda)\Bigl(\frac{i}{n}+\frac{ri}{n\,\Gi}\Bigr).
\end{align*}

\begin{figure}
     \centering
     \begin{subfigure}[b]{0.45\textwidth}
         \centering
         \includegraphics[width=\textwidth]{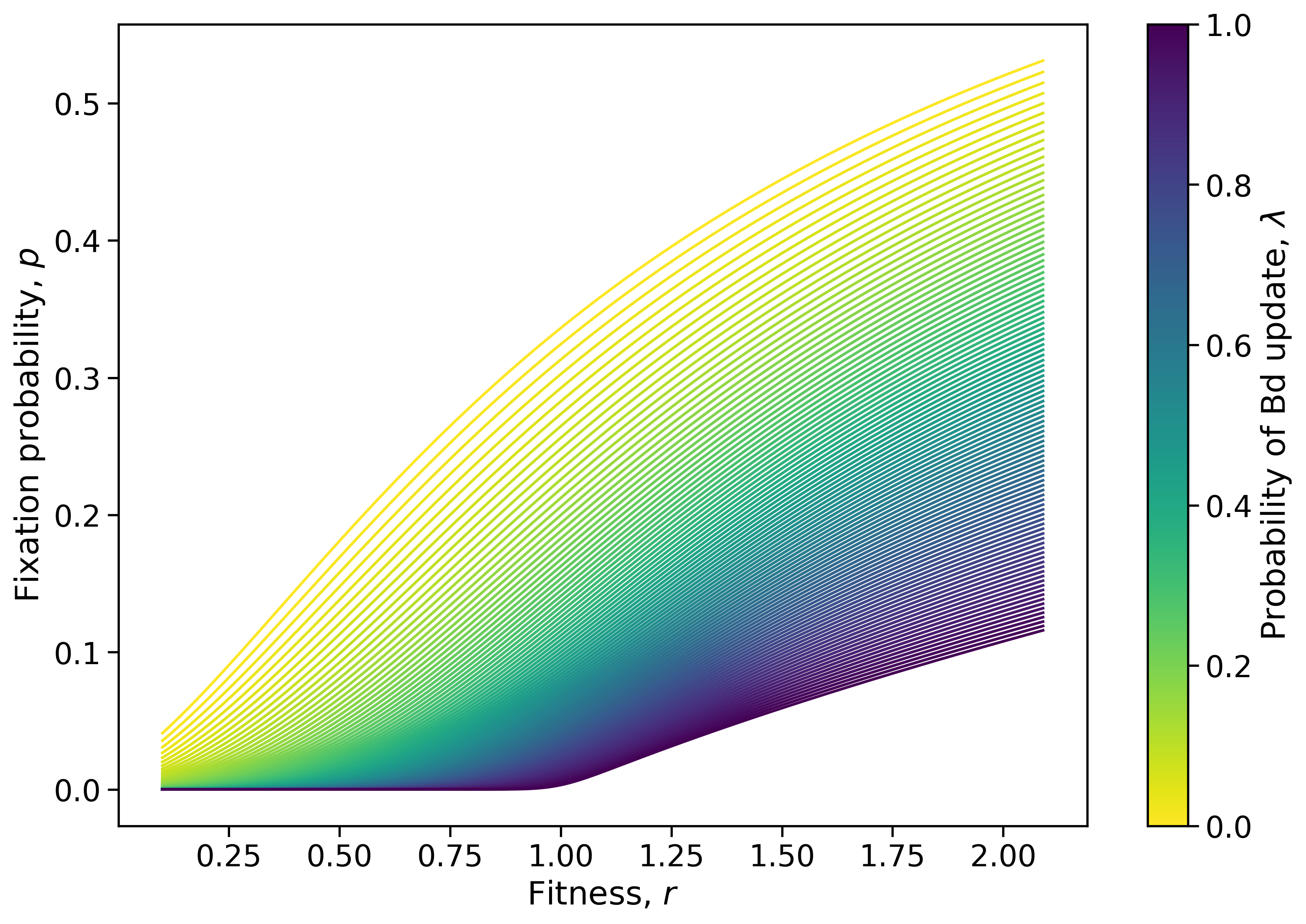}
         \caption{Initial mutant in the center}
     \end{subfigure}
     \hfill
     \begin{subfigure}[b]{0.45\textwidth}
         \centering
         \includegraphics[width=\textwidth]{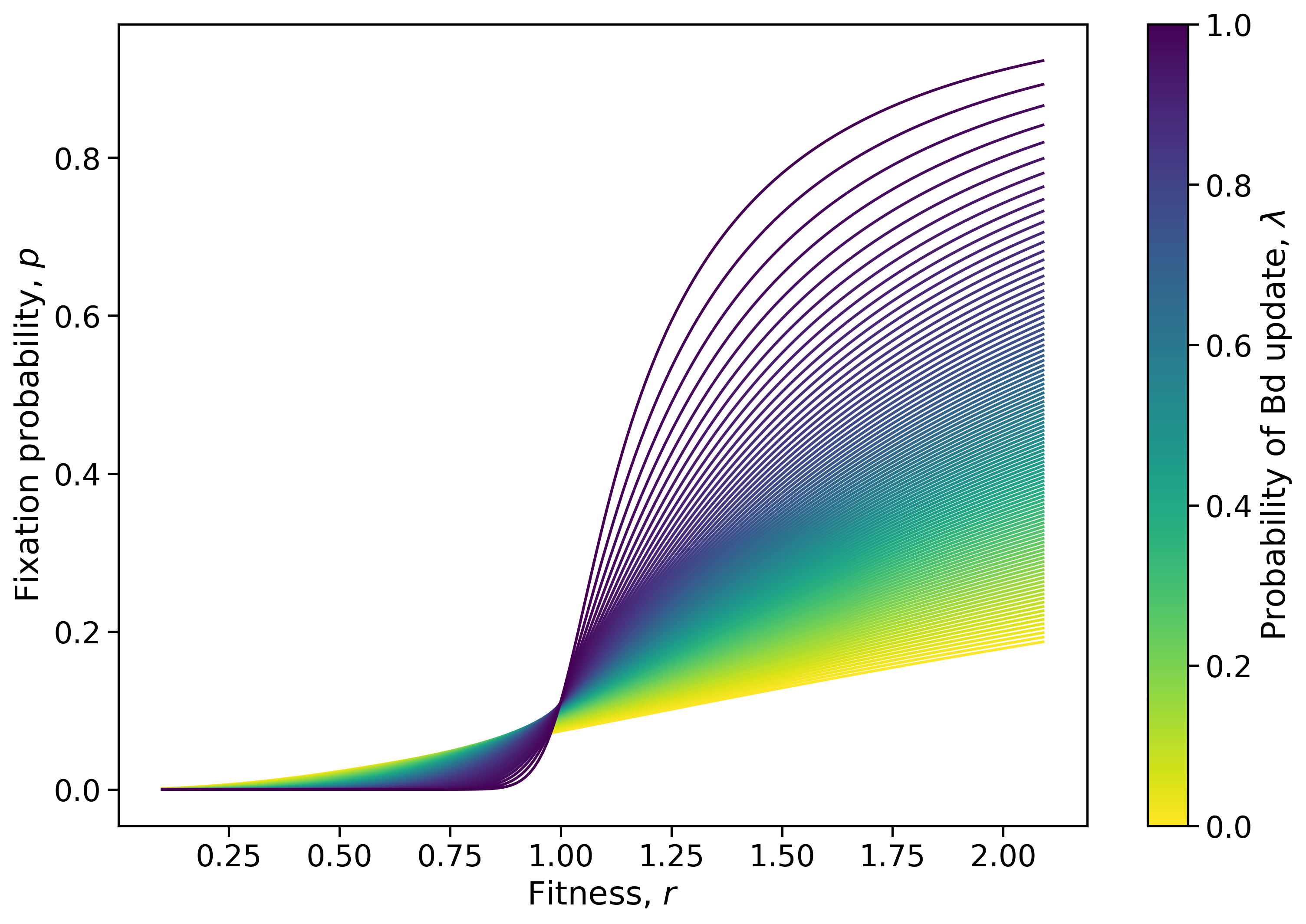}
         \caption{Initial mutant on a leaf}
         \label{fig:star-pe1}
     \end{subfigure}
    \caption{Fixation probabilities for a star on $10$ vertices starting from a single initial mutant, computed using the results from \Cref{section: star}.
    The two parameters are $r$ and $\lambda$, with $\lambda$ encoded by color. When the initial mutant is at the center, larger $\lambda$ (more Bd) consistently yields a smaller fixation probability. When starting from a leaf, there appears to be a phase transition around $r=1$ about whether greater Bd or dB gives a higher fixation probability.}
\end{figure}

Solving for $\Ps{i}$ isolates the unknowns on the right:
\[
\Ps{i}=\alpha_i\,\Ps{i+1}+\beta_i\,\Pe{i},
\qquad
\alpha_i=\frac{A_i}{1-C_i},
\qquad
\beta_i=\frac{B_i}{1-C_i}.
\]

When the center is resident, we similarly obtain
\[
(1-b_i)\,\Pe{i}=a_i\,\Ps{i}+c_i\,\Pe{i-1},
\]
where
\[
a_i=\lambda\,\frac{ri}{\Femp{i}}+(1-\lambda)\,\frac{ri}{n\,\Gi},
\qquad
c_i=\lambda\,\frac{i}{N\,\Femp{i}}+(1-\lambda)\,\frac{i}{n},
\qquad
b_i=1-a_i-c_i,
\]
and thus
\[
\Pe{i}=p_i\,\Ps{i}+q_i\,\Pe{i-1},
\qquad
p_i=\frac{a_i}{1-b_i},
\qquad
q_i=\frac{c_i}{1-b_i}.
\]

Combining the two recurrences gives
\[
\Ps{i+1}
= \frac{1-\beta_i p_i}{\alpha_i}\,\Ps{i} - \frac{\beta_i q_i}{\alpha_i}\,\Pe{i-1}.
\]
Equivalently,
\[
\begin{pmatrix}
\Ps{i+1}\\[2pt] \Pe{i}
\end{pmatrix}
= M_i
\begin{pmatrix}
\Ps{i}\\[2pt] \Pe{i-1}
\end{pmatrix},
\qquad
M_i=\begin{pmatrix}
\frac{1-\beta_i p_i}{\alpha_i} & -\frac{\beta_i q_i}{\alpha_i}\\[4pt]
p_i & q_i
\end{pmatrix}.
\]
Iterating,
\[
\begin{pmatrix}
\Ps{N}\\[2pt] \Pe{N-1}
\end{pmatrix}
=\left(\prod_{j=1}^{N-1} M_j\right)
\begin{pmatrix}
\Ps{1}\\[2pt] \Pe{0}
\end{pmatrix},
\qquad
A^{(i)}:=\prod_{j=1}^{i} M_j.
\]
Since $\Ps{N}=1$ and $\Pe{0}=0$,
\[
\Ps{1}=\frac{1}{A^{(N-1)}_{1,1}},
\qquad
\Ps{0}=\alpha_0\,\Ps{1},
\qquad
\Pe{1}=p_1\,\Ps{1}.
\]

\section{Conclusion and Discussion}
\label{sec: conclusion}
In this paper, we formalize and study the $\lambda$-mixed Moran process, a simple model that interpolates between Birth–death and death–Birth dynamics. We provide several results for the fixation probability and absorption time, in particular for random graphs and various special cases. Many interesting problems remain open; we outline a few below.
\begin{enumerate}
    \item Our bounds on absorption times and fixation probabilities are proved for special regimes. Can one establish analogous bounds for general graphs, as in the pure Bd and dB cases? For the Bd update, a highly involved analysis shows upper and lower bounds of $O(n^{3+\epsilon})$ and $\Omega(n^3)$, respectively~\cite{goldberg2020phase}, which are nearly tight. Could similar tight characterizations be achieved for the mixed Moran process? Note that even for the pure dB process, near-tight bounds are not known.
    \item We restricted our attention to unweighted, undirected graphs. What happens for directed graphs and weighted graphs? On directed weighted graphs, the Bd process, after selecting the birth vertex $u$ with probability proportional to the fitness, selects an out edge proportional to the edge weight; the dB process selects a death vertex $v$ uniformly at random and then selects among in-edges $(u, v)$ with probability proportional to the fitness of $u$ times the weight of the edge~\cite{lieberman2005evolutionary}. This introduces additional asymmetry between Bd and dB; how will the mixed process behave?
    \item In the Bd process, it is known that for a class of graphs called isothermal graphs (where a \emph{death} occurs at every vertex with equal probability when $r = 1$)~\cite{lieberman2005evolutionary}, each vertex has the same fixation probability as in a complete graph for all $r$. Is there an analogous class of graphs for the mixed process?
    \item Another question related to the Moran process is the design of suppressors and amplifiers. For $r>1$, a graph is a suppressor (resp., amplifier) if the fixation probability of a uniformly placed mutant in the graph is lower (resp., higher) than that in a complete graph~\cite{tkadlec2021fast,adlam2015amplifiers,galanis2017amplifiers,goldberg2020phase}. It would be interesting to study these structures for the mixed process, especially given that amplifiers and suppressors sometimes look different between the Bd and dB processes~\cite{svoboda2024amplifiers}.
\end{enumerate}

Overall, we believe that the $\lambda$-mixed process offers a principled bridge between the Bd and dB models, and we hope this perspective stimulates further study of evolutionary dynamics under mixed updating rules.

\section*{Acknowledgements}
The authors thank the anonymous reviewers of ITCS 2026 for their useful comments. YH and MM are supported by NSF grant CNS-2107078. DAB is supported by a Harvard Graduate School of Arts and Sciences Prize Fellowship.


\printbibliography

\appendix

\section{Proofs for Preliminaries}
\subsection{Basic Properties of mixed Moran Process}
\label{appendix: prelim-basic}

\Additive*
\begin{proof}
    Consider a modified process, where instead of two types, there exist $n$ types, all of which have a fitness of $1$. Initially, vertex $i$ starts with type $i$. The process evolves as the $\lambda$-mixed process with fitness $1$:
    \begin{itemize}
        \item A Bd step happens with probability $\lambda$, selecting a vertex to birth uniformly at random and replacing a random neighbor.
        \item A dB step happens with probability $1-\lambda$, selecting a vertex to die uniformly at random and adopting a random neighbor.
    \end{itemize}
    The process terminates when the whole graph contains a single type. Recall $\fp(S)$ is the fixation probability in the original process for a mutant currently occupying subset $S$, and we denote by $\widetilde {\fp}(u)$ the probability that type $u$ takes over the whole graph in the modified process. For any vertex $u$, by viewing all types $v \neq u$ as residents and $u$ as the mutant, we have $\widetilde{\fp}(u) = \fp(u)$. By a coupling argument, the probability that the probability of a mutant currently occupying $S$ fixating equals the probability that some $u \in S$ takes over, i.e., 
    \[\fp(S) = \sum_{u \in S} \widetilde{\fp}(u) = \sum_{u \in S} \fp(u). \qedhere\] 
\end{proof}
\Monotonicity*
\begin{proof}
We prove the case for $r \ge 1$, and the case when $r \le 1$ is implied by swapping the role of mutants and residents and setting fitness of the mutant as $1/r$.

We use a coupling argument. Let $(Z_t)_{t \ge 0}$ denote the neutral process with fitness $1$ and $(X_t)_{t \ge 0}$ denote the advantageous process with fitness $r > 1$, both starting from the same set $Z_0 = X_0 = S$. We construct a coupling such that $Z_t \subseteq X_t$ for all steps $t$. Consider the transition at time $t$. For $u \in V, v \in N(u)$, we denote by $q_t^Z(u \to v)$ the probability that vertex $u$ is selected for birth and replaces neighbour $v$ in $Z_t$, and $q_t^{Z}(u \leftarrow v)$ the probability that vertex $u$ is selected for death and is replaced by neighbour $v$ in $Z_t$. Similarly, $q_t^X$ denotes the same quantity for the chain $(X_t)$.
\begin{itemize}
    \item If $Z_t$ performs a Bd update with a mutant $u$ replacing a neighbour $v$, $X_t$ also performs a Bd update with mutant $u$ replacing $v$. This is possible since
    \[
    q_t^Z(u \to v) = \frac{\lambda}{n \deg_u} \le \frac{\lambda}{\deg_u} \frac{r}{w(X_t)} = q_t^X(u \to v).
    \]
    \item If $X_t$ performs a Bd update with resident $v$ replacing a neighbour $u$, $Z_t$ also performs a Bd update with resident $v$ replacing $u$. This is possible since
    \[
    q_t^X(v \to u) = \frac{\lambda}{\deg_v}\frac{1}{w(X_t)} \le \frac{\lambda}{n \deg_v} = q_t^Z(v \to u).
    \]
    \item If $Z_t$ performs a dB update with a vertex $v$ replaced by a mutant neighbour $u$, $X_t$ also does that. This is possible since 
    \[
    q_t^Z(v \leftarrow u) = \frac{1-\lambda}{n \deg_v} \le \frac{1-\lambda}{n} \frac{r}{w_v(X_t)} = q_t^X(v \leftarrow u).
    \]
    \item If $X_t$ performs a dB update with a vertex $u$ replaced by a resident neighrbour $v$, $Z_t$ also does that. This is possible since
    \[
    q_t^X(u \leftarrow v) =\frac{1-\lambda}{n} \frac{1}{w_v(X_t)}  \le \frac{1-\lambda}{n \deg_v} = q_t^Z(u \leftarrow v).
    \]
    \item All remaining probabilities place a mutant in $X_t$ and a resident in $Z_t$, so we can couple them arbitrarily.
\end{itemize}

Following this coupling, $Z_t \subseteq X_t$ holds for all $t$. We conclude that $\fp^{\lambda, 1}_G(S) \le \fp^{\lambda, r}_G(S)$.
\end{proof}

\begin{remark}
\label{remark: monotonicity-in-r}
    We briefly explain the challenge of proving the monotonicity of fixation probability with respect to $r$ in the mixed Moran process.

    For pure Bd updates, the challenge in establishing monotonicity in $r$ is that the reproduction probability is normalized by the total fitness, and consequently, having more mutants can potentially slow down the reproduction of other mutants, thereby breaking the natural coupling (see, e.g. \cite{diaz2016absorption} for a more thorough discussion). In \cite{diaz2016absorption}, this is overcome by pivoting to a continuous-time Markov chain, where each vertex waits a random time before reproduction, and this time follows an exponential distribution parameterized by its fitness. Then, the discrete sequence of events in this continuous-time Markov chain corresponds exactly to the discrete-time Moran process. Furthermore, in this setting, the presence of additional mutants does not slow down the reproduction of others. Thus, a natural coupling demonstrates stochastic dominance in the fixation probability of the continuous embedding.

    However, for the mixed process, this solution faces another challenge: In the continuous embedding for Bd, each vertex has a rate of $f(u)$ for Birth, whereas dB has a fixed rate of $1$ for death. This is problematic since it breaks the condition that the ratio between the rates of Bd and dB events remains $\lambda/(1-\lambda)$ and hence the discretization of the continuous-time chain is not equivalent to the discrete Moran process anymore. A natural attempt is then to scale the dB rate by the total fitness to fix this ratio, but this brings back the original problem: having more mutants increases the total fitness and thus the rate of death events, which again breaks the natural coupling. Regardless, we believe that monotonicity in $r$ still holds, although establishing it likely requires a more careful analysis.
\end{remark}

\subsection{Martingales, Drifts, and Approximation Schemes}
\label{appendix: prelim-technical}
We now prove the technical lemmas. We will need the following theorem in~\cite{diaz_approximating_2014}.

\begin{theorem}[~\cite{diaz_approximating_2014}]
\label{thm: abs time dis}
Let $(Y_i)_{i\ge 0}$ be a Markov chain with state space $\Omega$, where $Y_0$ is chosen from some set $I\subseteq \Omega$. If there exist constants $k_1,k_2>0$ and a nonnegative function $\phi:\Omega\to\mathbb{R}$ such that
\begin{itemize}[itemsep=0em]
    \item $\phi(S)=0$ for some $S\in\Omega$,
    \item $\phi(S)\le k_1$ for all $S\in I$, and
    \item $\mathbb{E}[\phi(Y_i)-\phi(Y_{i+1})\mid Y_i=S]\ge k_2$ for all $i\ge 0$ and all $S$ with $\phi(S)>0$,
\end{itemize}
then $\mathbb{E}[\tau]\le k_1/k_2$, where $\tau=\min\{i:\phi(Y_i)=0\}$.
\end{theorem}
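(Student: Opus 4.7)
The plan is to run a standard positive–drift argument: combine the guaranteed one-step decrease of $\phi$ with a linear-in-time correction to form a supermartingale, then iterate to bound $\mathbb{E}[\tau]$. This is the classical ``additive drift'' recipe.

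Concretely, I would define the stopped-and-corrected process
\[
M_i \;:=\; \phi\!\left(Y_{i\wedge\tau}\right) \;+\; k_2\cdot(i\wedge\tau),\qquad i\wedge\tau := \min(i,\tau),
\]
and verify that $(M_i)_{i\ge 0}$ is a nonnegative supermartingale with respect to the natural filtration $\mathcal{F}_i$. On $\{i\ge\tau\}$ the update is trivial, $M_{i+1}=M_i$. On $\{i<\tau\}$ one has $\phi(Y_i)>0$, so the drift hypothesis gives $\mathbb{E}[\phi(Y_{i+1})\mid Y_i]\le \phi(Y_i)-k_2$; adding the $+k_2$ time increment cancels this drift, yielding $\mathbb{E}[M_{i+1}\mid \mathcal{F}_i]\le M_i$. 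Iterating the supermartingale inequality then gives $\mathbb{E}[M_n]\le \mathbb{E}[M_0]=\phi(Y_0)\le k_1$ for every $n$. Dropping the nonnegative term $\phi(Y_{n\wedge\tau})$ and rearranging yields $k_2\cdot \mathbb{E}[n\wedge\tau]\le k_1$, and sending $n\to\infty$ via monotone convergence produces $\mathbb{E}[\tau]\le k_1/k_2$.

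There is no substantive obstacle here; the only care needed is bookkeeping. Specifically, I would stop the process at $\tau$ \emph{before} adding the linear correction, so that (i) $(M_i)$ remains a well-defined nonnegative supermartingale for all $i$ (not merely up to $\tau$), and (ii) the finiteness of $\mathbb{E}[\tau]$ is a consequence of the uniform-in-$n$ bound on $\mathbb{E}[n\wedge\tau]$ rather than an input to the argument. This avoids any a priori assumption that $\tau<\infty$ almost surely and any appeal to optional stopping in a form requiring integrability or bounded-increment side conditions.
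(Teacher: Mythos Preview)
Your argument is correct and is the standard additive-drift / supermartingale proof of this lemma. Note, however, that the paper does not actually prove this statement: it is quoted verbatim as a result from \cite{diaz_approximating_2014} and then used as a black box to derive \Cref{theorem: absorption time advantage}. So there is no ``paper's own proof'' to compare against here; you have supplied a self-contained proof where the paper defers to the literature.
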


\Absorb*

\begin{proof}
    Consider the following process $Y_i'$, which behaves identical to $Y_i$, except that if $\phi(Y_i') = 0$, then $\phi(Y_{i+1}') = S$ for some $S$ such that $\phi(S) = k_1$. Let $\phi'(S) = k_1-\phi(S)$. Clearly
    \[
    \min \{i: \phi'(Y'_i) = 0\} =\min \{i: \phi(Y'_i) = k_1\} \ge \min\{i:\phi(Y_i) \in \{0, k_1\}\} = \tau.
    \]
    
    Now $\phi'(Y_i')$ satisfies the conditions in \Cref{thm: abs time dis} with the same parameters, since for all $Y_i'$ such that $\phi'(Y_i') \notin \{0, k_1\}$, the conditions holds by assumption, and if $\phi'(Y_i') = k_1$, then $\phi'(Y_{i+1}') = 0$ so the conditions also hold. Applying \Cref{thm: abs time dis} gives the claim.
\end{proof}

\Fix*
\begin{proof}
By the claimed properties, $(\phi(S_i))_{i\ge 0}$ is a submartingale with bounded value.  By the optional stopping theorem,
\[
\mathbb{P}(\phi(Y_\tau)=k_1)k_1 =
\mathbb{E}[\phi(S_\tau)] \ge \mathbb{E}[\phi(S_0)] \ge k_0.
\]
Hence \[
\mathbb{P}(\phi(Y_\tau)=k_1)\ge \frac{k_0}{k_1}.
\]

If $\phi(Y_0) = k_0$ and $\E[\phi(Y_{i+1})-\phi(Y_i)\mid Y_i]= 0$, then the above holds with equality.
\end{proof}

Finally, we show the FPRAS construction via standard techniques. The idea is to run the experiments enough times and report the empirical mean, while aborting if any run took too long.

\FPRAS*
\begin{proof}
Write $f = \fp(S_0)$ the true fixation probability for the given $\lambda$-Moran process on $G$. By the hypotheses, there exist positive constants $C_{\mathrm{fp}},C_{\tau}$ and integers $c_1,c_2$ such that
\[
f\ge C_{\mathrm{fp}}n^{-c_1}
\qquad\text{and}\qquad
\mathbb{E}[\tau]\le C_{\tau}n^{c_2},
\]
where $\tau$ is the absorption time.

We run $N$ independent simulations of the process, each stopped upon absorption or after $T$ where \[
N=\left\lceil \frac{\ln(16)}{2\varepsilon^2C_{\mathrm{fp}}^2}n^{2c_1}\right\rceil,
\qquad
T=\left\lceil 8C_{\tau}Nn^{c_2}\right\rceil.
\]
If any run hits the cutoff $T$ without absorbing, abort and return an error $\hat f = \bot$. Otherwise, output
\[
\hat f=\frac{1}{N}\sum_{i=1}^{N}X_i,
\]
where $X_i=1$ if the $i$th run fixes and $X_i=0$ otherwise.

Since each step can be simulated in $O(1)$ time, the whole algorithm runs in time $O(NT)$, which is polynomial in $n$ and $1/\varepsilon$. We now bound the accuracy.

\begin{itemize}
    \item Consider a variant of the algorithm that always simulates until absorption with output $\tilde f=\frac{1}{N}\sum_{i=1}^{N}X_i$. Then $X_1,\dots,X_N$ are i.i.d. $\mathrm{Bernoulli}(f)$. Hoeffding’s inequality implies
\[
\Pr(|\tilde f-f|>\varepsilon f)\le 2\exp(-2\varepsilon^2 f^2 N)
\le 2\exp\left(-2\varepsilon^2 C_{\mathrm{fp}}^2 n^{-2c_1} N\right)\le \tfrac{1}{8},
\]
by the choice of $N$.
\item For a single run,
\[
\Pr(\tau>T)\le \frac{\mathbb{E}[\tau]}{T}\le \frac{C_{\tau} n^{c_2}}{8 C_{\tau} N n^{c_2}}=\frac{1}{8N}.
\]
We have $\hat f = \tilde f$ if and only if no run is cut off. By a union bound, $\Pr(\hat f\ne \tilde f) \le 1/8$.
\end{itemize}

Combining the two events, the failure probability is at most
\[
\Pr(|\tilde f-f|>\varepsilon f) \le\Pr(|\tilde f-f|>\varepsilon f) + \Pr(\hat f\ne \tilde f) \le 1/4. \qedhere
\]
\end{proof}

One could amplify the success probability to $1-\delta$ by running the algorithm $O(\log (1/\delta))$ times and take the median (see, e.g. \cite{vazirani2010approx}).

\section{Regular Graphs}
\label{appendix-regular}
\Regular*
\begin{proof}
We first establish the fixation probability when $r=1$. Fix $S_0 = \{u\}$ for an arbitrary vertex $u$. Consider the potential $\phi(S)=|S|$. Since all degrees are equal,
\[
 \psi_{u,v}(S) = \lambda\left[\frac{1}{n}\cdot\frac{1}{\deg_u} - \frac{1}{n}\cdot\frac{1}{\deg_v}\right]
+ (1-\lambda)\left[\frac{1}{n}\cdot\frac{1}{\deg_v} - \frac{1}{n}\cdot\frac{1}{\deg_u}\right] 
= 0.
\]
Thus, $\phi(S)$ is a martingale. The theorem follows from \Cref{theorem: fixation prob} and \ref{thm: additive}. 

We now bound the absorption time. Let the graph $G$ be $d$-regular. Using the exact same argument as in the proof of \Cref{thm:fp-half}, we obtain that the absorption time is $O(n^3d)$ when $r=1$. It remains to show the case where $r\ne 1$. We establish the result for $r > 1$. The case $r < 1$ is symmetric by swapping the roles of mutants and residents.

Consider a boundary edge $(u,v) \in \bdry(S)$. The expected drift contributed by this edge is:
\begin{align*}
     \lambda \cdot \psi_{u, v}^{Bd}(S) + (1-\lambda) \cdot \psi_{u, v}^{dB}(S) &= \frac {\lambda} {w(S)} \left(\frac r {d} - \frac 1 {d} \right)+ \frac {1-\lambda} {n} \left(\frac r {w_v(S)} - \frac 1 {w_u(S)} \right)
\end{align*}

The Bd component is strictly positive. Since $w(S) \le nr$, we have:
\[\frac {\lambda} {w(S)} \left(\frac r {d} - \frac 1 {d} \right) \geq \frac{\lambda(r-1)}{rnd}.\]

For the dB component, we perform an analysis similar to that in the proof of \Cref{thm:random-graphs-main}. 
\begin{itemize}
    \item If all neighbors of the mutant $u$ are residents and all neighbors of the resident $v$ are mutants, then $w_u(S) = d$ and $w_v(S) = rd$. The dB term vanishes:
    \[\frac r {w_v(S)} - \frac 1 {w_u(S)} = \frac r {rd} - \frac 1 {d} = 0.\]
    \item If $u$ has at least one mutant neighbor, then $w_u(S) \ge r + (d-1)$. Thus:
    \[\frac r {w_v(S)} - \frac 1 {w_u(S)} \geq \frac r {rd} - \frac 1 {d+r-1} = \frac{1}{d} - \frac{1}{d+r-1} \geq \frac{r-1}{d(d+r)}.\]
    \item If $v$ has at least one resident neighbor, then $w_v(S) \le r(d-1) + 1$. Thus:
    \[
    \frac r {w_v(S)} - \frac 1 {w_u(S)} \geq \frac r {r(d-1)+1} - \frac 1 {d} \geq \frac{r-1}{d^2r}.
    \]
\end{itemize}

We call a configuration ``bad'' if every mutant has only resident neighbors and every resident
has only mutant neighbors. 

In a bad state, the drift is non-negative, and a bad state converts into a non-bad state in one step. Furthermore, in a non-bad state, we have
\[\E[\phi(S_{i+1})-\phi(S_i) \mid S_i] \geq \frac {r-1}{rn^2d}.\]
Consider a modified Markov chain $S'$ which ``glues'' one step in a bad configuration with one more step, which preserves the absorption time up to a constant factor and satisfies
\[\E[\phi(S_{i+1}')-\phi(S_i') \mid S_i] \geq \frac {r-1}{rn^2d}.\]

Since $\phi(V) = n$, applying \Cref{theorem: absorption time advantage} yields an absorption time of $\frac{r}{r-1} O(n^3d) = O_r(n^4)$.
\end{proof}

\end{document}